\newtheorem{thm}{Theorem}[section]
\newtheorem{prop}[thm]{Proposition}
\newtheorem{lem}[thm]{Lemma}
\newtheorem{cor}[thm]{Corollary}
\newtheorem*{Mthm}{Main Theorem}
\theoremstyle{definition}
\newtheorem{dfn}[thm]{Definition}
\def\C{\mathbb{C}}
\def\c{{\bf c}}
\def\D{\mathbb{D}}
\def\H{\mathbb{H}}
\def\R{\mathbb{R}}
\def\Q{\mathbb{Q}}
\def\0{\emptyset}
\def\Ac{\mathcal{A}}  \def\Cc{\mathcal{C}}
\def\Fc{\mathcal{F}}   \def\Mc{\mathcal{M}}
\def\Pc{\mathcal{P}} \def\Rc{\mathcal{R}}  
   \def\Wc{\mathcal{W}}
 \def\Zc{\mathcal{Z}}  
\def\Z{\mathbb{Z}}
\renewcommand\emptyset{\varnothing}
\newcommand{\sm}{\setminus}
\def\eps{\varepsilon}
\def\ol{\overline}
\def\d{\partial}
  \def\ta{\theta}  \def\Ga{\Gamma} 
    \def\la{\lambda} 
\def\om{\omega}
\def\vp{\varphi}
\def\le{\leqslant}
\def\ge{\geqslant}
\def\Im{{\rm Im}}
\def\uc{\mathbb{S}^1}
\def\bd{\mathrm{Bd}}
\def\disk{\mathbb{D}}
\def\<{\langle}
\def\>{\rangle}
\def\r{\mathsf{R}}
\def\CP{\mathbb{C}P}
\def\jac{\mathrm{Jac}}
\def\c{\mathsf{C}}
\def\rt{\mathrm{Rt}}
\begin{document}

\title{On critical renormalization of complex polynomials}

\author[A.~Blokh]{Alexander~Blokh}

\author[P.~Ha\"issinsky]{Peter Ha\"issinsky}

\author[L.~Oversteegen]{Lex Oversteegen}

\thanks{The third named author was partially  supported
by NSF grant DMS-1807558}

\author[V.~Timorin]{Vladlen~Timorin}

\thanks{The fourth named author has been supported by the HSE University Basic Research Program.}

\address[Alexander~Blokh and Lex~Oversteegen]
{Department of Mathematics\\ University of Alabama at Birmingham\\
Birmingham, AL 35294-1170}

\address[Peter~Ha\"issinsky]
{Aix-Marseille Univ., CNRS\\
I2M,  UMR 7373\\
39, rue Fr\'ed\'eric Joliot Curie\\
13453 Marseille Cedex 13, France}

\address[Vladlen~Timorin]
{Faculty of Mathematics\\
HSE University, Russian Federation\\
6 Usacheva St., 119048 Moscow
}

\address[Vladlen~Timorin]
{Independent University of Moscow\\
Bolshoy Vlasyevskiy Pereulok 11, 119002 Moscow, Russia}

\email[Alexander~Blokh]{ablokh@math.uab.edu}
\email[Peter~Ha\"issinsky]{peter.haissinsky@math.cnrs.fr}
\email[Lex~Oversteegen]{overstee@math.uab.edu}
\email[Vladlen~Timorin]{vtimorin@hse.ru}

\subjclass[2010]{Primary 37F20; Secondary 37F10, 37C25}

\keywords{Complex dynamics; Julia set; Mandelbrot set}

\begin{abstract}
Holomorphic renormalization plays an important role in complex polynomial dynamics.
We consider invariant continua that are not polynomial-like Julia sets because of extra critical points.
However, under certain assumptions, these invariant continua can be identified with Julia sets of
 lower degree polynomials up to a topological conjugacy.
Thus we extend the concept of renormalization.
\end{abstract}

\maketitle
\section{Introduction}
\label{s:intro}

One-dimensional holomorphic dynamics can be viewed as a natural toy model for
various phenomena yielding rigorous results that can be transferred to other
dynamical systems. An important example here is the concept of
renormalization. It appears in many contexts but is especially closely
studied for polynomial maps for whom Douady and Hubbard \cite{DH-pl}
introduced the notion of polynomial-like mappings. Such mappings provide an
efficient framework to study renormalization. In the present paper, we
propose a new setting under which polynomials exhibit renormalization.

We start our Introduction by providing the necessary definitions. Then the
main result of this paper is stated. Finally, we explain its relevance by
illustrating it in different contexts.

Consider a degree $d>1$ polynomial $P:\C\to\C$ and a full $P$-invariant continuum $Y\subset\C$.
Say that $P:Y\to Y$ is a \emph{degree $k$ branched covering} if there is a degree $k$ branched covering $\tilde P:U\to \tilde P(U)$
 where $U$ is a neighborhood of $Y$, we have $\tilde P|_Y=P|_Y$, and $Y$ is a component of $\tilde P^{-1}(Y)$.

Points of $\ol{P^{-1}(Y)\sm Y}\cap Y$ are called \emph{irregular points} of $Y$.
A point $y\in Y$ is irregular if arbitrarily close to $y$ there are points $y'$ that do not belong to $Y$ but map into $Y$.
Since $P|_Y$ is locally onto, for each such $y'$ there is a point $y''\in Y$ close to $y'$ and such that $P(y'')=P(y')$.
It follows that $y$ is critical.
Thus, all irregular points of $Y$ are critical; the converse is not true in general.
The main result of this paper is the following.

\begin{Mthm}
Let $P:\C\to\C$ be a polynomial.
Consider a full $P$-invariant continuum $Y\subset\C$ and an integer $k>1$ such that:
\begin{enumerate}
  \item the map $P:Y\to Y$ is a degree $k$ branched covering;
  \item all irregular points of $Y$ are eventually mapped to repelling periodic points;
  \item the immediate basins of all attracting or parabolic points in $Y$ are subsets of $Y$.
\end{enumerate}
Then $P:Y\to Y$ is topologically (in fact, quasi-symmetrically) conjugate to $Q|_{K_Q}$,
 where $Q$ is a polynomial of degree $k$ with connected filled Julia set $K_Q$.
\end{Mthm}

Fig. \ref{fig:apl1} shows (in dark grey) an invariant set $Y$ for polynomial $P(z)=z(z+2)^2$.
The set $Y$ satisfies the assumptions of the Main Theorem.
In particular, $P|_Y$ is topologically conjugate to $Q(z)=-z+z^2$, even though $Y$ is not a polynomial-like (PL) filled Julia set.
A statement similar to the Main Theorem first appeared in \cite[Prop. 1, Ch. 5]{hai98-phd}.
It was made in a more general context of \emph{polynomial figures}.

\begin{figure}
  \centering
  \includegraphics[width=10cm]{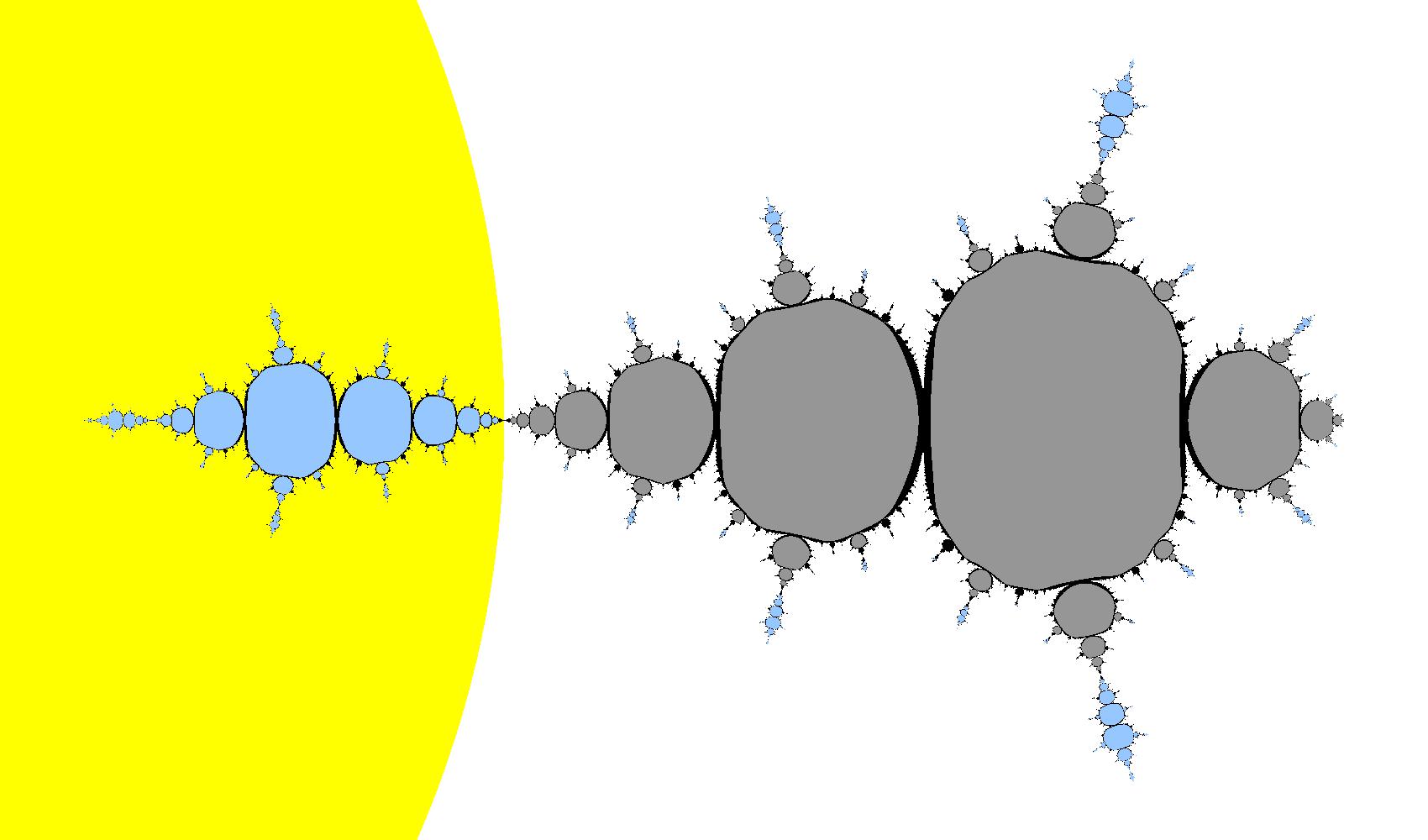}
\caption{An invariant set $Y$ for the polynomial $P(z)=z(z+2)^2$ (shown in dark grey).}
 \label{fig:apl1}
\end{figure}

The Main Theorem fits into the following paradigm:
 extrinsic properties of an invariant subset $Y\subset K_P$ imply intrinsic (structural) properties.
A pioneering work in higher-dimensional real dynamics in which the structure of hyperbolic sets under various
assumptions is described fits into that paradigm (see, e.g., \cite{hk95} and references thereof).
In the context of complex polynomial dynamics, it was advanced by the Straightening Theorem of Douady and Hubbard \cite{DH-pl}
 (see Theorem \ref{t:DH-pl} below), where polynomial-like behavior of $P$ \emph{outside} of the corresponding
 PL filled Julia set $Y$ implies a hybrid conjugacy \emph{on} $Y$.
A folklore result of Theorem \ref{t:poly-like} (= Theorem B of \cite{bopt16a})
 gives an equivalent but easier to verify extrinsic conditions on $Y$.
The Main Theorem is a partial extension of Theorem \ref{t:poly-like}, in which both the assumptions
 and conclusions are weaker.

Below, four sample applications of the Main Theorem are mentioned.

\subsection{Planar fibers}
The Main Theorem applies to the
\emph{planar fibers} (the notion is due to Schleicher \cite{sch99} and was studied
in other papers, e.g., in \cite{bclos16}).
Suppose that $K_P$ is connected.
Call a periodic repelling or parabolic point, or an iterated preimage thereof,
a \emph{valuable} point. If $z$ is a valuable point at which
more than one external ray lands, call the union $\mathrm{Cu}_z$ of $z$ with all external rays landing at
$z$ the \emph{star cut (at $z$)}.
The set $\mathrm{Cu}_z$ partitions $\C$ into finitely many open wedges.
A \emph{planar fiber (of $P$)} is a non-empty intersection
of the closures of open wedges chosen at every valuable point with a star cut.
It follows that a planar fiber is the union of a full subcontinuum of $K_P$ and various rays, and that planar fibers
map onto planar fibers. Planar fibers are important for studying symbolic dynamics of $P$
and relating the dynamics of $P$ and that of $z^d|_{\uc}$ even in the case when $K_P$ has no good topological properties.

Let $F$ be a periodic planar fiber of $P$ of minimal period $m$ and set $Y=F\cap K_P$.
It can be shown that $P^m:Y\to Y$ is a degree $k$ branched covering;
moreover, $k=1$ implies that $Y$ is a repelling periodic point.
A detailed argument for the latter claim ($Y$ is a singleton if $k=1$) is given in Theorem \ref{t:1-1} (cf. \cite{bfmot13}).
On the other hand, if $Y$ is non-degenerate, then it must contain a non-repelling periodic point of period $m$ and a critical point.
The Main Theorem implies the following corollary concerning planar fibers. By an \emph{outward parabolic} point of $Y$
we mean a parabolic point in $Y$ whose basin is not in $Y$.

\begin{cor}
  \label{c:main1}
Let $F$ be a periodic planar fiber
of a polynomial $P$ of minimal period $m$ and set $Y=F\cap K_P$.
If $Y$ has no outward parabolic points, or parabolic points that are eventual images of
irregular critical points in $Y$, then
 $P^m|_Y$ is topologically conjugate to $Q|_{K_Q}$ for some polynomial $Q$ of degree $>1$.
Moreover, $K_Q$ has a non-repelling fixed point and no valuable cutpoints.
\end{cor}

Corollary \ref{c:main1} follows from the Main Theorem since,
 as we show in Section \ref{s:fk-apl}, all irregular points of $Y$ are necessarily preperiodic
 and accessible from the basin of infinity.
It follows that they are eventually mapped to repelling or parabolic cycles;
 on the other hand, the case when they are mapped to outward parabolic points is excluded by the assumptions of Corollary \ref{c:main1}.
See Section \ref{s:fk-apl} for more details.

\subsection{Inou--Kiwi straightening domains}
More generally, invariant or periodic continua appear in the context of Inou--Kiwi renormalization \cite{IK12}.
Recall that star cuts of a polynomial $T$ give rise to equivalence classes of a certain equivalence relation $\lambda_T$ on $\Q/\Z$.
This relation $\lambda_T$ is called the \emph{rational lamination} of $T$.
For brevity say that a star cut comes \emph{from $\lambda_T$} if the arguments of the rays in the
star cut form one $\lambda_T$-class.
An interesting case is when a polynomial $P_0$ is hyperbolic and
$\lambda_P\supset\lambda_{P_0}$ for some polynomial $P$.
For a fixed $P_0$, the set of such monic centered polynomials $P$ is denoted by $\Cc(\lambda_{P_0})$.
One is tempted to think of $P$ as a result of \emph{tuning} applied to $K_{P_0}$,
 that is, an operation replacing the closures of attracting basins (and their iterated pullbacks)
 with filled Julia sets of suitable degree.
However, it is sometimes difficult to make this understanding precise.

Let $U$ be a periodic Fatou domain of $P_0$ of minimal period $m$.
Then there is the corresponding period $m$ continuum $Y_U\subset K_P$ for any $P\in \Cc(\lambda_{P_0})$.
In order to define $Y_U$, consider only the star cuts of $P$ that come from $\lambda_{P_0}$
 and the corresponding wedges; call the latter \emph{$\lambda_{P_0}$-wedges} of $P$.
There is a natural one-to-one correspondence between the $\lambda_{P_0}$-wedges of $P$ and the wedges of $P_0$.
By definition, the set $Y_U$ is the intersection of the closures of all $\lambda_{P_0}$-wedges of $P$
 such that the corresponding wedges of $P_0$ contain $U$.
Note that $Y_U$ is not always a polynomial-like Julia set, since it may contain ``unwanted'' critical or parabolic points.
Given $P_0$, the set of all $P\in\Cc(\lambda_{P_0})$ such that, for all $U$ as above,
 $Y_U$ are polynomial-like, is denoted by $\Rc(\lambda_{P_0})$.
This is the domain of the \emph{straightening map}.
It is proved in \cite{IK12} that $\Rc(\lambda_{P_0})=\Cc(\lambda_{P_0})$ if and only if $P_0$ is \emph{primitive},
 i.e., the closures of distinct bounded Fatou components of $P_0$ are disjoint.
Straightening maps of Inou--Kiwi type have been studied in several recent papers, see e.g. \cite{I18,sw20,w21}.
The Main Theorem allows one to extend the straightening maps to certain elements of
 $\Cc(\lambda_{P_0})\setminus\Rc(\lambda_{P_0})$.

\begin{cor}
  \label{c:Y_U}
 Let $Y_U\subset K_P$ be as above.
 If $Y_U$ has no outward parabolic points, or parabolic points that are eventual images of
irregular critical points in $Y$, then $P^m|_{Y_U}$ is topologically conjugate to $Q|_{K_Q}$
 for some polynomial $Q$ whose degree coincides with that of $P_0^m:U\to U$.
\end{cor}

The proof of Corollary \ref{c:Y_U} is similar to that of Corollary \ref{c:main1}, see Section \ref{s:fk-apl}.
We believe that our techniques extend to arbitrary mapping schemata,
 with essentially the same arguments, but leave necessary modifications to the reader.

\subsection{A special case of the Douady conjecture}
An irrational number $\theta\in\R\sm\Q$ is said to be \emph{Brjuno} if $B(\theta)<\infty$, where
$$
B(\theta)=\sum_{n=1}^{\infty}\frac{\log q_{n+1}}{q_n}
$$
is the \emph{Brjuno function}, and $p_n/q_n\to \theta$ are the continued fraction convergents for $\theta$.
By a theorem of Brjuno \cite{brj71}, $B(\theta)<\infty$ implies that any holomorphic germ of the form $f(z)=e^{2\pi i\theta} z+\dots$
 is linearizable.
Yoccoz \cite{yoc} proved a partial converse: if a quadratic polynomial $Q(z)=e^{2\pi i\theta} z+z^2$ is linearizable,
 then $\theta$ is Brjuno.
The \emph{Douady conjecture} states that this is also true for higher-degree polynomials
(see \cite{dou87}).
A new proof of a special case of the Douady conjecture can be deduced from the Main Theorem.

\begin{cor}
  \label{c:dou}
  Let $P(z)=e^{2\pi i\theta}z+\dots$, where $\theta\in\R\sm\Q$, be a cubic polynomial with at least one (pre)periodic critical point.
Then $P$ is linearizable at $0$ if and only if $\theta$ is Brjuno.
\end{cor}

The proof of Corollary \ref{c:dou} is given in Section \ref{s:fk-apl}.
Note that the same result also follows from \cite{bc11}, which, however, uses essentially different methods.
Recall \cite[Corollary 7.7]{bcot21} that the conclusion of Corollary \ref{c:dou} holds
 whenever a cubic polynomial $P(z)=e^{2\pi i\theta}z+\dots$
 is not in the closure $\Pc$ of the principal hyperbolic component $\Pc^\circ$ (the one containing $z^3$).
Namely, if $P\notin\Pc$, and $\theta$ is not Brjuno, then $P$ is not linearizable at $0$, that is, $0$ is a Cremer point.
Corollary \ref{c:dou} yields the same conclusion even for some $P\in\Pc$.
In fact, if a strictly preperiodic critical point of $P$ belongs to the planar fiber of $0$, then $P\in\Pc$.
This follows from \cite[Corollary D]{bcot21} and the fact that a critical point of $P$ being preperiodic
 implies that $P$ cannot lie inside a stable domain of the slice of cubic polynomials $f$ given by the conditions
 $f(0)=0$, $f'(0)=e^{2\pi i\theta}$.

\subsection{Multipliers of periodic points}
The next Corollary follows immediately from the Main Theorem and,
in the irrational neutral  case, results by Perez-Marco \cite{P-M}.

\begin{cor}
\label{c:pm}
Assuming the conditions of  the Main Theorem, let $\vp$ be a topological conjugacy between $P|_Y$
and $Q|_{K_Q}$, and let $C$ be a periodic cycle in $Y$.
Then $C$ is attracting (resp., repelling, neutral) if and only if $\vp(C)$ is attracting (resp., repelling, neutral).
Moreover, if $C$ is non-repelling, then it has the same multiplier as $\vp(C)$.
\end{cor}

Observe also that (as follows from the proof of the Main Theorem) $\vp$ extends to a
 quasi-conformal embedding of an open set containing $Y$.
Therefore, the boundary of $Y$ has positive area if and only if the corresponding quadratic Julia set $J_Q$ does.
Recall that Buff and Ch\'eritat \cite{bc12} gave examples of quadratic polynomial Julia sets of positive area.
These translate into examples of non-renormalizable cubic polynomials whose Julia sets have positive measure.

\section{Preliminaries}
\label{s:prelim}

Throughout, let $P:\C\to\C$ be a polynomial of degree $d>1$ with connected filled Julia set $K_P$.
Clearly, $P$ acts on the Riemann sphere $\ol\C=\CP^1$ so that $P(\infty)=\infty$.
In contrast to rational dynamics, the point at infinity plays a special role in the dynamics of $P$.
A classical theorem of B\"ottcher states that $P$ is conjugate to $z\mapsto z^d$ near infinity.
Since $K_P$ is connected, the conjugacy can be defined on $\ol\C\sm K_P$ as follows.
We will write $\disk=\{z\in\C\,|\, |z|<1\}$ for the open unit disk in $\C$ and $\ol\disk$ for its closure.
Without loss of generality we may assume that $P$ is \emph{monic}, i.e., the highest order term of $P$ is $z^d$.
Let $\psi_P:\disk\to\ol\C\sm K_P$ be a conformal isomorphism normalized so that $\psi_P(0)=\infty$ and $\psi'_P(0)>0$.
Then $\psi_P^{-1}\circ P\circ\psi_P$ is a degree $d$ holomorphic self-covering of $\disk$.
The only option for such a holomorphic self-covering is $z\mapsto \la z^d$ with $|\la|=1$.
By the chosen normalization of $P$ and $\psi_P$, the coefficient $\la$ must be equal to $1$.
It follows that $P(\psi_P(z))=\psi_P(z^d)$ for any $z\in \C$.

Thus, if we use the polar coordinates $(\ta,\rho)$ on $\disk$ and identify $\disk$ with $\ol\C\sm K_P$ by $\psi_P$,
 then the action of $P$ will look like $(\ta,\rho)\mapsto (d\ta,\rho^d)$.
Here $\ta$ is the angular coordinate; it takes values in $\R/\Z$ (elements of $\R/\Z$ are called \emph{angles}).
The coordinate $\rho$, the radial coordinate, is the distance to the origin.
On $\disk\sm\{0\}$ (hence, after the transfer, on $\C\sm K_P$), it takes values in $(0,1)$.
\emph{External rays} of $P$ are defined as the $\psi_P$-images of radial straight intervals in $\disk$.

\subsection{Rays and equipotentials}
\label{ss:rays}
Consider a straight radial interval $\r(\ta)=\{e^{2\pi i\ta}\rho\,|\,\rho\in (0,1)\}$ from $0$ to the point $e^{2\pi i\ta}$.
The \emph{external ray of $P$ of argument} $\ta\in\R/\Z$ is the set $R_P(\ta)=\psi_P(\r(\ta))$.
External rays are useful for studying the dynamics of $P$.
In particular, it is important to know when different rays land at the same point.

\begin{dfn}[Ray landing]
\label{d:rayland}
 A ray $R_P(\ta)$ \emph{lands} at $a\in K_P$ if $a=\lim_{\rho\to 1^-} \psi_P(e^{2\pi i\ta}\rho)$
is the only accumulation point of $R_P(a)$ in $\C$.

By the Douady--Hubbard--Sullivan landing theorem, if $\ta$ is rational, then $R_P(\ta)$ lands at a (pre)periodic point
that is eventually mapped to a repelling or parabolic periodic point.
Conversely, any point that eventually maps to a repelling or parabolic periodic point is the
 landing point of at least one and
at most finitely many external rays with rational arguments.
\end{dfn}

An \emph{equipotential curve} of $P$ (or simply an \emph{equipotential}) is the $\psi_P$-image of a circle $\{z\in\C\,|\, |z|=\rho\}$
 of radius $\rho\in (0,1)$ centered at $0$.
External rays and equipotentials form a net that is the $\psi_P$-image of the polar coordinate net.

\subsection{Quasi-regular and quasi-symmetric maps}
Let us recall the definition of quasi-regular \cite{ric} and quasi-conformal maps \cite{ahl66}.

\begin{dfn}[Quasi-regular maps]
\label{d:qr}
Let $U$ and $V$ be open subsets of $\C$, and let $\kappa\ge 1$ be a real number.
A map $f:U\to V$ is said to be \emph{$\kappa$-quasi-regular} if it has distributional partial derivatives in $L^2_{loc}$, and
 $||df||^2\le \kappa\, \jac_f$ in $L^1_{loc}$.
Here $df$ is the first differential of $f$, and $\jac_f$ is the Jacobian determinant of $f$.
Note that any holomorphic map is $\kappa$-quasi-regular with $\kappa=1$.
We say that $f$ is \emph{quasi-regular} if it is $\kappa$-quasi-regular for some $\kappa\ge 1$.
A \emph{quasi-conformal map} is by definition a quasi-regular homeomorphism.
All these maps are orientation-preserving by definition ($\jac_f\ge 0$ follows from the inequality displayed above).
\end{dfn}

The inverse of a ($\kappa$-)quasi-conformal map is ($\kappa$-)quasi-conformal.
Quasi-conformal maps admit a number of analytic and geometric characterizations.
They can be characterized in terms of Beltrami differentials and in terms of moduli of annuli or similar conformal invariants.
See 4.1.1 and 4.5.16 --- 4.5.18 in \cite{hub}.
A metric characterization of quasi-conformal maps is based on the following notion applicable to general metric spaces,
cf. \cite{tv80}.

\begin{dfn}[Quasi-symmetric maps]
\label{d:qs}
Let $(X,d_X)$ and $(Y,d_Y)$ be metric spaces, and let $\eta:[0,\infty)\to [0,\infty)$ be an increasing onto homeomorphism.
A continuous embedding $f:X\to Y$ is said to be \emph{quasi-symmetric of modulus $\eta$} (or \emph{$\eta$-quasi-symmetric}) if
\begin{equation}\label{eq:qsmap}
  \frac{d_Y(f(x),f(y))}{d_Y(f(x),f(z))}\le \eta\left(\frac{d_X(x,y)}{d_X(x,z)}\right)
\end{equation}
for all $x\ne y\ne z$ that are sufficiently close to each other.
We will sometimes abbreviate quasi-symmetric as QS.
The inverse of a QS embedding (defined on $f(X)$) is $\eta'$-QS, where $\eta'(t) = 1/\eta^{-1}(1/t)$.
The composition of QS embeddings is also QS.
A continuous embedding $f:X\to Y$ is \emph{$\kappa$-weakly QS} for some $\kappa>0$ if
$$
d_X(x,y)\le d_X(x,z)\quad\Longrightarrow\quad d_Y(f(x),f(y))\le\kappa\, d_Y(f(x),f(z)).
$$
\emph{Weakly QS embeddings} are $\kappa$-weakly QS for some $\kappa>0$.
Clearly, QS embeddings are weakly QS.
The converse is not true in general, however, by Theorem 10.19 of \cite{hei}, weakly QS embeddings are QS in a lot of cases.
In particular, a weakly QS embedding of a connected subset of $\R^n$ to $\R^n$ is QS.
Occasionally we will talk about ``QS maps'' which will always mean ``QS embeddings''.
\end{dfn}

The following theorem establishes a relationship between QS embeddings and quasi-conformal maps.

\begin{thm}[A special case of Theorems 2.3 and 2.4 of
\cite{vai81}]
  \label{t:qs-qc}
  An $\eta$-QS embedding between domains in $\C$ is $\kappa$-quasi-conformal
($\kappa\ge 1$ is a constant depending only on $\eta$).
Conversely, consider a $\kappa$-quasi-conformal map $f:U\to V$, where $U$, $V\subset\R^2$ are open.
Then, for any $z\in U$ and $\eps>0$ such that the $2\eps$-neighborhood of $z$ lies in $U$,
 the map $f$ is $\eta$-QS on the $\eps$-neighborhood of $z$, where $\eta$ depends only on $\kappa$.
\end{thm}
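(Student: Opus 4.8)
This is a special case of Theorems 2.3 and 2.4 of \cite{vai81}; I sketch the route. \emph{From quasi-symmetric to quasi-conformal.} Let $f\colon U\to V$ be an $\eta$-QS embedding between domains in $\C$. The plan is to pass through the metric definition of quasiconformality. Applying \eqref{eq:qsmap} to triples $x,y,z$ with $|x-y|=|x-z|=r$ and letting $r\to0^+$ shows that the linear dilatation
\[
H_f(x)=\limsup_{r\to0^+}\ \frac{\max_{|y-x|=r}|f(y)-f(x)|}{\min_{|y-x|=r}|f(y)-f(x)|}
\]
is at most $\eta(1)$ at every point $x\in U$, and the same bound applies to $f^{-1}$, which is $\eta'$-QS with $\eta'(t)=1/\eta^{-1}(1/t)$. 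By the classical theorem of Gehring and V\"ais\"al\"a that a homeomorphism between planar domains with uniformly bounded linear dilatation satisfies the analytic and geometric definitions of quasiconformality with a constant depending only on that bound (cf.\ 4.5.16--4.5.18 of \cite{hub}), $f$ is $\kappa$-quasi-conformal with $\kappa=\kappa(\eta)$.

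\emph{From quasi-conformal to locally quasi-symmetric.} Let $f\colon U\to V$ be $\kappa$-quasi-conformal, fix $z\in U$ and $\eps>0$ with $B=B(z,2\eps)\subset U$, and let $x,y,w\in B(z,\eps)$ be distinct; put $t=|x-y|/|x-w|$. The plan is to bound $|f(x)-f(y)|/|f(x)-f(w)|$ by an increasing homeomorphism $\eta(t)$ of $[0,\infty)$ depending only on $\kappa$, by a modulus estimate. In the regime of $t$ small I would take for $A$ the round annulus centered at $x$ with radii $2|x-y|$ and $\tfrac14\min\{|x-w|,\eps\}$; it lies in $B\subset U$ precisely because the $2\eps$-neighborhood of $z$ is available, it separates $\{x,y\}$ together with the segment $[x,y]$ from $\{w\}\cup(\C\sm B)$, and its modulus is bounded below by $\tfrac1{2\pi}\log\tfrac1{16t}$, the scale $\eps$ dropping out. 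The image $f(A)\subset V$ separates $\{f(x),f(y)\}$ from $\{f(w),\infty\}$, so the modulus inequality for $\kappa$-quasi-conformal maps together with Teichm\"uller's upper bound $\Phi$ for the modulus of a ring separating two points from a third point and $\infty$ gives
\[
\tfrac1{2\pi\kappa}\log\tfrac1{16t}\ \le\ \mathrm{mod}(f(A))\ \le\ \Phi\!\left(\frac{|f(x)-f(w)|}{|f(x)-f(y)|}\right),
\]
where $\Phi$ is a fixed increasing function of its argument with $\Phi(0^+)=0$. Inverting $\Phi$ bounds $|f(x)-f(y)|/|f(x)-f(w)|$ by a quantity depending only on $\kappa$ and $t$ that tends to $0$ as $t\to0$; running the symmetric argument with the roles of $y$ and $w$ exchanged, and filling the gap for moderate $t$ with the triangle inequality, produces the required $\eta=\eta(\kappa)$, so $f|_{B(z,\eps)}$ is $\eta$-QS.

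The only genuinely nonroutine ingredient, in the first direction, is the Gehring--V\"ais\"al\"a equivalence between the metric and the analytic definitions of quasiconformality with controlled constants; the rest there is formal. In the second direction the delicate point is \emph{uniformity}: the auxiliary ring domains must be fit inside $U$ --- which is exactly what the hypothesis on the $2\eps$-neighborhood provides --- and one then uses the conformal (hence scale) invariance of the modulus and the precise asymptotics of the Teichm\"uller and Gr\"otzsch ring-modulus functions near $0$ and $\infty$ to make $\eta$ depend on $\kappa$ alone, uniformly in $z$, $\eps$, and the chosen points. All of this is carried out in detail in \cite{vai81}, which is why in the paper we simply quote it.
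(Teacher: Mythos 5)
The paper does not prove Theorem~\ref{t:qs-qc}; it is quoted verbatim as a known result with a pointer to Theorems~2.3 and~2.4 of \cite{vai81}, so there is no internal argument to compare yours against. Your reconstruction follows the standard and correct route --- in one direction through the bound $H_f\le\eta(1)$ on the linear (metric) dilatation and the Gehring equivalence of definitions, and in the other through the modulus quasi-invariance applied to a round annulus together with the Teichm\"uller extremal ring estimate --- which is essentially what V\"ais\"al\"a does. Two small points worth tightening if you were to write this out in full: in the annulus step you need the trivial bound $\eps\ge |x-w|/2$ (both points lying in $B(z,\eps)$) to absorb the case $\min\{|x-w|,\eps\}=\eps$ into the constant $16$, which you implicitly used; and for the Teichm\"uller inequality to apply as stated you should argue, via $f(\bd B)$ and the fact that $f(A)$ has a bounded inner complementary component, that $f(A)$ does separate $\{f(x),f(y)\}$ from $\{f(w),\infty\}$ on the sphere --- routine, but not entirely automatic.
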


Quasi-conformal images of circle arcs, circles, and disks can be described explicitly.

\begin{dfn}[Quasi-arc, quasi-circle, quasi-disk]
A \emph{simple arc} in $\C$ is the image of $[0, 1]$ under a homeomorphic embedding $\xi:[0, 1]\to \C$.
A simple arc $I$ is a \emph{quasi-arc} if for any such $\xi$ and any
$x\le y\le z$ we have
\begin{equation}\label{eq:qa}
  |\xi(x)-\xi(z)|\ge C|\xi(x)-\xi(y)|,
\end{equation}
where $C>0$ is a constant independent of $x$, $y$, $z$ and $\xi$.
A \emph{quasi-circle} is a Jordan curve such that any sufficiently small arc of it is a quasi-arc with a uniform constant $C$.
For quasi-arcs and quasi-circles in the \emph{Riemann sphere}
$\ol\C=\mathbb{C}P^1$, we use the spherical distance between $a$ and
$b$ instead of $|a-b|$. 
A \emph{quasi-disk} is a Jordan disk
bounded by a quasi-circle. A \emph{quasi-reflection} in a
 Jordan curve is an orientation-reversing involution
 of the sphere that (1) switches the inside and the outside of the curve fixing points on the curve,
 (2) upon post-composition with any anti-holomorphic homeomorphism of the sphere, produces a quasi-conformal map.
\end{dfn}

The following theorem is due to L. Ahlfors, see \cite{ahl66} or 4.9.8, 4.9.12, and 4.9.15 in \cite{hub}:

\begin{thm}
  \label{t:ahl}
  Properties $(1) - (3)$ of a Jordan curve $S$ are equivalent:
  \begin{enumerate}
    \item the curve $S$ is a quasi-circle;
    \item there is a bi-Lipschitz quasi-reflection in $S$;
    \item there is a quasi-conformal map $h:\ol\C\to\ol\C$ such that $S=h(\ol\R)$.
  \end{enumerate}
\end{thm}

Observe also that QS embeddings of quasi-arcs are quasi-arcs;
 moreover, preimages of quasi-arcs under QS-embeddings are quasi-arcs too.

Quasi-symmetric maps between quasi-circles can be extended inside the corresponding quasi-disks as
quasi-conformal maps.

\begin{thm}
  \label{t:qs-ext}
If $U$ and $V$ are quasi-disks in $\ol\C$, and $f:\bd(U)\to\bd(V)$ is a quasi-symmetric map, then
there is a continuous map $F:\ol U\to\ol V$ such that $F=f$ on $\bd(U)$, and $F$ is quasi-conformal in $U$.
\end{thm}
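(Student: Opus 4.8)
The plan is to reduce the statement to the model case $U=V=\disk$ by pre- and post-composing with quasi-conformal uniformizations, and then to solve the extension problem on the unit disk by the classical Beurling--Ahlfors--type construction. First I would invoke Theorem \ref{t:ahl}: since $U$ and $V$ are quasi-disks, there are quasi-conformal homeomorphisms $g_U,g_V:\ol\C\to\ol\C$ with $g_U(\ol\R)=\bd(U)$ and $g_V(\ol\R)=\bd(V)$; composing with a Möbius transformation we may arrange that $g_U(\cdisk)=\ol U$ and $g_V(\cdisk)=\ol V$ (say, by precomposing with a map of the upper half-plane to $\disk$). Then $h:=g_V^{-1}\circ f\circ g_U:\uc\to\uc$ is a circle homeomorphism, and it is quasi-symmetric: indeed $g_U^{-1}$ restricted to $\bd(U)$ and $g_V$ restricted to $\uc$ are QS by Theorem \ref{t:qs-qc} (a quasi-conformal map is locally QS, and on a compact quasi-circle the local moduli combine to a global one), and $f$ is QS by hypothesis, so the composition $h$ is QS by Definition \ref{d:qs}. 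Here I would note that for self-maps of $\uc$ one must be slightly careful with the ``sufficiently close'' clause in \eqref{eq:qsmap}, but since $\uc$ is compact and $h$ is a homeomorphism, the local QS condition upgrades to a global three-point condition with a (possibly larger) modulus $\eta'$; alternatively one transfers everything to $\ol\R$ where $h$ becomes a QS self-homeomorphism of the line.

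Next I would apply the Beurling--Ahlfors extension: a quasi-symmetric homeomorphism of $\uc$ (equivalently of $\ol\R$) extends to a quasi-conformal homeomorphism $H:\cdisk\to\cdisk$ with $H=h$ on $\uc$, continuous up to the boundary, and quasi-conformal in the interior $\disk$ with dilatation bounded in terms of the modulus of $h$. This is the place where the QS hypothesis is exactly what is needed — Beurling--Ahlfors is false for merely homeomorphic boundary data — and I would cite it as the analytic core (it appears, e.g., in \cite{ahl66} and \cite{hub}, alongside the references already quoted for Theorem \ref{t:ahl}). Finally I set
\[
F:=g_V\circ H\circ g_U^{-1}:\ol U\to\ol V.
\]
Then $F$ is continuous on $\ol U$ as a composition of continuous maps; on $\bd(U)$ we have $F=g_V\circ h\circ g_U^{-1}=g_V\circ(g_V^{-1}\circ f\circ g_U)\circ g_U^{-1}=f$; and on $U$, the map $F$ is a composition of quasi-conformal maps ($g_U^{-1}|_U$ onto $\disk$, then $H|_\disk$, then $g_V|_\disk$), hence quasi-conformal, since quasi-conformality is preserved under composition (Definition \ref{d:qr} and the remark that the inverse of a quasi-conformal map is quasi-conformal). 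This yields the desired $F$.

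The main obstacle is the verification that the boundary conjugation $h$ is genuinely quasi-symmetric as a map of the \emph{circle} (or line), not merely that each factor is QS on its own domain: one must check that composing a QS embedding of a quasi-arc into $\C$ with the inverse parametrization of another quasi-arc produces a QS homeomorphism of model arcs, i.e., that QS-ness survives ``flattening'' a quasi-circle to a line. This is handled by the observation recorded just before Theorem \ref{t:qs-ext} — QS embeddings of quasi-arcs are quasi-arcs and preimages of quasi-arcs under QS embeddings are quasi-arcs — combined with the fact that on a quasi-arc the chordal metric and the arclength-type parameter induced by a quasi-conformal chart are comparable via a controlled $\eta$. Once this bookkeeping is done, everything else is a formal composition of known theorems; no delicate estimate beyond Beurling--Ahlfors is required, and I would keep that step as a black-box citation rather than reproving it.
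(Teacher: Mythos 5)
Your proof is correct and takes essentially the same route as the paper: uniformize $U$ and $V$ by global quasi-conformal maps via Theorem \ref{t:ahl}, check that the conjugated boundary map is quasi-symmetric, extend it by Beurling--Ahlfors, and compose back. The only cosmetic difference is that the paper works on the upper half-plane and explicitly derives the Ahlfors $\R$-quasi-symmetric condition with modulus $M=\eta(1)$ before citing the extension theorem, while you state the reduction on the circle and defer the transfer to $\ol\R$ to a remark.
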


\begin{proof}
By Theorem \ref{t:ahl}, there are quasi-conformal maps $h_U$, $h_V:\ol\C\to\ol\C$ that take the upper half-plane
$\H=\{z\in\C\,|\,\Im z>0\}$ onto $U$, $V$, respectively.
Then the map $\vp=h_V^{-1}\circ f\circ h_U:\ol\R\to\ol\R$ is quasi-symmetric as a composition of quasi-symmetric maps.
Pre-composing $h_U$ and $h_V$ with suitable real fractional linear maps, arrange that $\vp(\infty)=\infty$.
Let $\eta$ be a modulus of $\vp$ (so that $\vp$ is $\eta$-quasi-symmetric).
Setting $y=x+t$ and $z=x-t$ in the definition of an $\eta$-quasi-symmetric map, we see that
$$
M^{-1}\le\frac{\vp(x+t)-\vp(x)}{\vp(x)-\vp(x-t)}\le M,
$$
where $M=\eta(1)$.
Maps $\vp$ that satisfy the above condition for some $M>0$ are called \emph{$\R$-quasi-symmetric} in \cite{hub}.
The constant $M$ is called the \emph{modulus} of an $\R$-quasi-symmetric map.
By a theorem of Ahlfors and Beurling \cite{ab56} (see also \cite{ahl66} and 4.9.3 and 4.9.5 of \cite{hub}),
 an $\R$-quasi-symmetric map of modulus $M$ admits a $\kappa$-quasi-conformal extension in $\H$, where $\kappa$ depends only on $M$.
More precisely, there is a continuous map $\Phi:\ol\H\to\ol\H$ such that $\Phi=\vp$ on $\ol\R$,
 and $\Phi|_{\H}$ is $\kappa$-quasi-conformal.
Then $F=h_V\circ\Phi\circ h_U^{-1}$ has the desired property.
\end{proof}

\subsection{Straightening}
Let $U$ and $V$ be Jordan disks such that $U\Subset V,$ that is, $\ol U$ is a compact subset of $V$.
Recall the following classical definitions of Douady and Hubbard \cite{DH-pl}.

\begin{dfn}[Polynomial-like maps]
\label{d:pl}
  Let $f:U\to V$ be a proper holomorphic map.
Then $f$ is said to be \emph{polynomial-like} (PL).
The \emph{filled Julia set} $K(f)$ of $f$ is defined as the set of points in $U$, whose forward $f$-orbits stay in $U$.
\end{dfn}

Similarly to polynomials, the set $K(f)$ is connected if and only if all critical points of $f$ are in $K(f)$.

\begin{dfn}[Hybrid equivalence]
\label{d:hybeq}
 Let $f_1:U_1\to V_1$ and $f_2:U_2\to V_2$ be two PL maps.
Consider Jordan neighborhoods $W_1$ of $K(f_1)$ and $W_2$ of $K(f_2)$.
A quasiconformal homeomorphism $\phi:W_1\to W_2$ is called a \emph{hybrid equivalence} between $f_1$ and $f_2$
 if $f_2\circ\phi=\phi\circ f_1$ whenever both parts are defined, and  $\ol\d\phi=0$ on $K(f_1)$.
\end{dfn}

Recall the following classical theorem of Douady and Hubbard \cite{DH-pl}.

\begin{thm}[PL Straightening Theorem]
  \label{t:DH-pl}
  A poly\-no\-mi\-al-like map $f :U\to V$ is hybrid equivalent to a polynomial of the same degree restricted on a
Jordan neighborhood of its filled Julia set.
\end{thm}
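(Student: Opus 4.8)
The plan is to carry out the Douady--Hubbard quasiconformal surgery. Fix a polynomial-like map $f:U\to V$ of degree $d$. First I would shrink $V$: replacing $V$ by a slightly smaller Jordan domain and $U$ by its $f$-preimage, I may assume that $\d U$ and $\d V$ are smooth Jordan curves (avoiding critical values of $f$), without changing $K(f)$. Put $A=\ol V\sm U$; this is a closed annulus with $\d A=\d U\sqcup\d V$, and $f|_{\d U}:\d U\to\d V$ is a degree $d$ covering of a circle by a circle. Next, glue the exterior $\ol\C\sm\disk$, carrying the map $z\mapsto z^d$, onto $\ol V$ along $\d V\cong\d\disk$ by a smooth diffeomorphism, producing a topological sphere which I again identify with $\ol\C$. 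Define $g$ to equal $f$ on $\ol U$, to equal $z\mapsto z^d$ on the glued-in exterior, and on $A$ to be a smooth unbranched degree $d$ covering interpolating between $f|_{\d U}$ and the model map carried to $\d V$; such an interpolation exists because both boundary maps are degree $d$ circle coverings. The resulting $g:\ol\C\to\ol\C$ is a quasiregular polynomial of degree $d$: it is holomorphic off $A$, satisfies $g^{-1}(\infty)=\{\infty\}$, its critical points are those of $f$ in $U$ together with $\infty$, and it coincides with $f$ on a neighborhood of $K(f)$.

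The crucial point is that $A$ is a fundamental annulus for the escaping dynamics. One checks that $g^{-1}(A)=\ol{A_1}$ with $A_1=U\sm f^{-1}(\ol U)$, and inductively that the annuli $g^{-n}(A)$, $n\ge0$, have pairwise disjoint interiors and union to $\ol V\sm K(f)$; in particular every $g$-orbit enters $A$ at most once. Hence one can build a $g$-invariant Beltrami coefficient $\mu$ on $\ol\C$: set $\mu=0$ on the forward-invariant holomorphy region $\ol\C\sm\ol V$, set $\mu$ equal to the complex dilatation of $g$ on $A$, and define $\mu$ elsewhere by demanding $g^*\mu=\mu$ a.e. Passing through $A$ at most once means there is no compounding of dilatation, so $\|\mu\|_\infty=\|\mu|_A\|_\infty<1$; moreover $\mu=0$ a.e. on $K(f)$ since the orbit of a point of $K(f)$ never meets $A$. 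Integrating $\mu$ by the measurable Riemann mapping theorem --- equivalently, applying Theorem~\ref{t:sw} to $g$ and $A$ with $T=1$ --- yields a quasiconformal homeomorphism $\Psi:\ol\C\to\ol\C$, normalized after post-composition with a M\"obius map so that $\Psi(\infty)=\infty$, together with a holomorphic map $P=\Psi\circ g\circ\Psi^{-1}$ of degree $d$ satisfying $\ol\d\Psi=0$ a.e. on the set of non-escaping points, in particular a.e. on $K(f)$.

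It remains to read off the statement. Since $P^{-1}(\infty)=\Psi(g^{-1}(\infty))=\{\infty\}$ and $\deg P=d$, the map $P$ is a polynomial of degree $d$, that is, of the same degree as $f$. Choose a Jordan domain $W_1$ with $K(f)\subset W_1\Subset U$ and put $W_2=\Psi(W_1)$, again a Jordan domain and a neighborhood of $\Psi(K(f))$. Then $\Psi:W_1\to W_2$ is quasiconformal; it conjugates $f$ to $P$ wherever both compositions are defined, because $g=f$ on $\ol U\supset W_1$ while $\Psi\circ g\circ\Psi^{-1}=P$ globally; it satisfies $\ol\d\Psi=0$ on $K(f)$; and it carries $K(f)$, the set of points whose $g$-orbit stays in $U$, onto $K_P$, the set of bounded $P$-orbits (non-escaping $g$-orbits stay in $U$, escaping ones leave $\ol V$ forever and $\Psi$ maps them to orbits tending to $\Psi(\infty)=\infty$). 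Thus $\Psi$ is a hybrid equivalence between $f$ and the polynomial $P$ restricted to the Jordan neighborhood $W_2$ of $K_P$, as claimed.

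I expect the main obstacle to be the surgery in the first step together with the estimate $\|\mu\|_\infty<1$ in the second: the whole argument rests on $A$ being a genuine fundamental annulus for the escaping dynamics, so that iterating $g$ --- and hence spreading $\mu$ around the sphere --- never amplifies the dilatation. This is exactly where the hypothesis $\ol U\subset V$ enters essentially, and it is the heart of the proof; the remaining steps are routine manipulations with quasiconformal maps.
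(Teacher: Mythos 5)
The paper states this result as classical and simply cites Douady--Hubbard \cite{DH-pl} without reproducing a proof, so there is no in-paper argument to compare against; your reconstruction is indeed the standard quasiconformal surgery argument from that reference, and its overall structure (quasiregular extension, fundamental annulus, invariant Beltrami form with no compounding of dilatation, measurable Riemann mapping) is sound. One of the side remarks is also apt: invoking Theorem~\ref{t:sw} with $T=1$ is a legitimate alternative to integrating $\mu$ by hand, and in fact matches the spirit in which the present paper later uses \cite{sw20}.

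There is, however, a small but genuine slip in the gluing step. You glue $\ol\C\sm\disk$ to $\ol V$ along $\d V\cong\d\disk$ and let the model map be $z\mapsto z^d$. But then the model sends $\d\disk$ to $\d\disk$, so after transport the model map on $\d V$ is a degree-$d$ \emph{self}-map of $\d V$. Your interpolation $g|_A$ is then asked to be an unbranched degree-$d$ covering of an annulus $A$ onto its image while sending \emph{both} boundary circles $\d U$ and $\d V$ to the \emph{same} curve $\d V$; no covering map of an annulus does that, so $g$ as written is not a branched covering of the sphere. The standard fix is to glue along $\d V\cong\{|z|=R\}$ for some $R>1$ (equivalently, use a diffeomorphism $\ol\C\sm V\to\ol\C\sm\ol{\disk_R}$ intertwining the boundary maps), so that the transported model sends $\d V$ to the curve corresponding to $\{|z|=R^d\}$, strictly outside $\ol V$. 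Then $g(A)$ is a genuine annulus disjoint from $A$, the annuli $g^{-n}(A)$ have disjoint interiors as you claim, and the rest of your argument goes through verbatim.
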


Theorem \ref{t:poly-like} below appears to be a folklore result.
It is formally proved, e.g., in \cite{bopt16a} (Theorem B).

\begin{thm}
  \label{t:poly-like}
  Let $P:\C\to\C$ be a polynomial, and $Y\subset\C$ be a non-se\-pa\-ra\-ting
  $P$-invariant continuum.
  The following assertions are equivalent:
\begin{enumerate}
 \item the set $Y$ is the filled Julia set of some polynomial-like
     map $P:U\to V$ of degree $k$,
 \item the set $Y$ is a component of the set $P^{-1}(P(Y))$ and, for every
     attracting or parabolic point $y$ of $P$ in $Y$, the
     attracting basin of $y$ or the union of all parabolic domains
     at $y$ is a subset of $Y$.
\end{enumerate}
\end{thm}

We will need quasi-regular maps whose topological properties resemble those of polynomials.

\begin{dfn}
  \label{d:qr-poly}
A quasiregular map $f:\ol\C\to\ol\C$ is called a \emph{quasiregular polynomial} if $f^{-1}(\infty)=\{\infty\}$, and
 $f$ is holomorphic near infinity.
\end{dfn}

Let us state a partial case of \cite[Theorem 5]{sw20}.

\begin{thm}
\label{t:sw}
  Let $f:\C\to\C$ be a quasi-regular polynomial of degree $d\ge 2$ and let $A\subset\C$ be a Borel set
such that $\ol\d f=0$ a.e. outside $A$.
Assume that there is a positive integer $T$ such that, for every $z$,
 the set of nonnegative integers $k$ with $f^k(z)\in A$ has cardinality $\le T$.
Then there is a QC map $\Psi:\C\to\C$ and a polynomial $F:\C\to\C$ of degree $d$ such that $F\circ\Psi=\Psi\circ f$.
Moreover, $\ol\d\Psi=0$ holds a.e. on the set $\{z\in\C\,|\, f^n(z)\notin A\ \forall n\ge 0\}$.
\end{thm}

This version is a rather straightforward extension of the Douady--Hubbard straightening theorem (Theorem \ref{t:DH-pl})
 and is similar to Shi\-shi\-ku\-ra's Fundamental Lemma for qc-surgery (cf. \cite[Lemma 3.1]{Shi87}),
however, the general version of \cite[Theorem 5]{sw20} is much more powerful.

\section{Avoiding sets}
\label{s:avoid}
In this section, we give more detailed statements of the main results, and outline the plan for the rest of the paper.
Throughout this section, $P:\C\to\C$ is a complex degree $d>1$ polynomial with \emph{connected} $K_P$.

\subsection{Cuts and avoiding sets}
\label{ss:genren}
If external rays $R$ and $L$ land at the same point $a$,
the union $\Ga=R\cup L\cup\{a\}$ is called a \emph{cut}.
The point $a$ is called the \emph{root point} of $\Ga$.
The cut $\Ga$ is \emph{degenerate} if $R=L$ and \emph{nondegenerate} otherwise.
A subarc of a degenerate cut that contains its landing point is called a
\emph{terminal segment} of the cut. Nondegenerate cuts separate $K_P$.
A \emph{wedge} is a complementary component of a cut in $\C$; the root point of a wedge
is the root point of the corresponding cut.
We assume that cuts are oriented from $R$ to $L$ so that every cut $\Ga$ bounds a unique wedge $W=W_\Ga$
 where $\Ga$ is the \emph{oriented} boundary of $W$.
If $\Ga$ is degenerate, then we set $W_\Ga=\0$.
For a finite collection of cuts $\Zc$, let $\Wc_\Zc$ be the collection $\{W_\Ga\mid \Ga\in\Zc\}$ of the corresponding wedges
 and let $\bigcup\Wc_{\Zc}=\bigcup_{\Ga\in\Zc} W_\Ga$ denote the union of these wedges.
Say that $\Zc$ is $P$-\emph{invariant} if $P(\Ga)\in\Zc$ for every $\Ga\in\Zc$.

\begin{dfn}\label{d:avoid}
A finite set $\Zc$ of cuts is \emph{admissible} if it is $P$-invariant,
two distinct cuts from $\Zc$ can share at most a common root point, and
 $\C\sm \bigcup\Wc_{\Zc}$ is a connected set containing all $\Ga\in\Zc$;
 the latter set is called the \emph{principal set} of $\Zc$.
Let $A_P(\Zc)$ be the set of all $x\in K_P$ such that $f^n(x)$ is in the principal set, for all $n\ge 0$.
Equivalently, $x\in K_P$ belongs to $A_P(\Zc)$ if $f^n(x)\notin \bigcup\Wc_{\Zc}$ for $n\ge 0$.
The set $A_P(\Zc)$ is called the \emph{avoiding set} of $\Zc$.
\end{dfn}

Observe that if $\Zc$ is admissible then all associated wedges are pairwise disjoint.
By definition $\Ga\cap K_P\subset A_P(\Zc)$ for every $\Ga\in\Zc$.
Formally, the definition of $A_P(\Zc)$ is applicable to the case $\Zc=\0$.
If $\Zc$ is empty or consists of only degenerate cuts, then $A_P(\Zc)=K_P$.
Otherwise, $A_P(\Zc)$ is a proper subset of $K_P$.
A root point $a$ of a cut $\Ga\in \Zc$ is called \emph{outward parabolic} if $a$ is a parabolic periodic point,
 and there is a Fatou component in $K_P\sm A_P(\Zc)$ containing an attracting petal of $a$.
If a periodic root point $a$ of a cut $\Ga\in \Zc$ is not outward parabolic,
 then it is said to be \emph{outward repelling}.
Observe that an outward repelling periodic root point $a$ of a cut $\Ga\in \Zc$ may be parabolic;
 in that case Fatou components containing attracting petals of $a$ are all contained in $A_P(\Zc)$.
Define $\rt_\Zc$ as the collection of root points of all cuts from $\Zc$.
Classical arguments yield Theorem \ref{t:pl}.
Recall: we write $U\Subset V$ if $\ol{U}\subset V$ is compact.

\begin{thm}
  \label{t:pl}
Let $\Zc$ be admissible, and suppose that $A_P(\Zc)$ is connected.
If there are no critical or outward parabolic points in $\rt_\Zc$,
 then there exist Jordan domains $U\Subset V$ such that $P:U\to V$ is polynomial-like, and
 $A_P(\Zc)$ is the filled Julia set of this polynomial-like map.
In particular, $P|_{U}$ is hybrid equivalent to a polynomial $Q$ restricted to a neighborhood of $K_Q$.
If $\Zc\ne\0$, then $\deg(Q)<d$.
\end{thm}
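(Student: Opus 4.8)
The plan is to carry out the classical Douady--Hubbard construction of a polynomial-like restriction, the only genuinely new point being that some root points of cuts in $\Zc$ may be parabolic.

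\emph{Reductions.} As $\Zc$ is finite and $P$-invariant, every cut in $\Zc$ is preperiodic and every point of $\rt_\Zc$ is preperiodic, those on the periodic cuts being periodic; since no root point is critical, $P$ is locally injective there, so a periodic root point is either repelling or parabolic (external rays land on it, ruling out the attracting and irrationally neutral cases). The structural fact I would record first is the content of the remaining hypothesis: if $a\in\rt_\Zc$ is a parabolic root point of $\Gamma\in\Zc$, then $a$ being outward repelling means that no Fatou component with an attracting petal at $a$ lies in any wedge $W_{\Gamma'}$ ($\Gamma'\in\Zc$); as external rays land at $a$ only along repelling axes, this forces both rays of $\Gamma$ to point along one and the same repelling axis, with $W_\Gamma$ the narrow sector between them --- so that, near $a$, $W_\Gamma$ sits inside a single repelling petal. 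This is exactly what makes the surgery below work at parabolic points.

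\emph{Building $U\Subset V$.} Fix an equipotential $E=\psi_P(\{|z|=r\})$, $r<1$ close to $1$, and delete from the bounded complementary component of $E$ the closed truncated wedges $\ol{W_\Gamma}\cap(\text{bounded side of }E)$, $\Gamma\in\Zc$; call the result $\Omega_0$. Then $\Omega_0$ is disjoint from every $W_\Gamma$, the truncated external rays of the cuts lie on $\partial\Omega_0$, and $\ol{\Omega_0}\cap K_P$ is $A_P(\Zc)$ with pinches at the finitely many points of $\rt_\Zc$. Near each such point I would replace the pinch by a small ``bulge'', built from a linearizing coordinate for $P^q$ at a repelling root point of period $q$, from attracting/repelling Fatou coordinates at a parabolic one, and by pulling these back along $P$ at the strictly preperiodic ones. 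Choosing the bulges small enough and mutually $P$-compatible, their union with $\Omega_0$ is a Jordan domain $V$ with $A_P(\Zc)\Subset V$, each bulge meeting the wedges only within an arbitrarily small neighbourhood of a root point, and $P$ carrying each bulge onto its image bulge. Finally let $U$ be the component of $P^{-1}(V)$ containing $A_P(\Zc)$: this is well defined since $A_P(\Zc)$ is connected and $P(A_P(\Zc))\subseteq A_P(\Zc)\subset V$, so $A_P(\Zc)$ lies in one component of $P^{-1}(V)$.

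\emph{Verification and straightening.} Three things remain. (i) $\ol U\Subset V$: the full preimage $P^{-1}(\ol V)$ lies within the bounded side of the equipotential $\psi_P(\{|z|=r^{1/d}\})$, which is compactly contained in the bounded side of $E\supset\ol V$; and $U$ can reach a deleted wedge only through the corresponding bulge, because elsewhere the truncated rays lie on $\partial V$ and wall $U$ off from the wedges; so for small bulges $\ol U\subset V$. (ii) $P\colon U\to V$ is proper, being the restriction of the proper map $P\colon\C\to\C$ to a full preimage component, and it carries no critical point on $\partial U$ (those of $P$ near $\rt_\Zc$ are excluded by hypothesis); hence it is polynomial-like, of some degree $d'\le d$. (iii) Its filled Julia set $\bigcap_{n\ge 0}P^{-n}(U)$ equals $A_P(\Zc)$: the inclusion $\supseteq$ is forward invariance, while for $\subseteq$ a point whose whole orbit stays in $V$ is bounded, hence lies in $K_P$, and can meet a wedge $W_\Gamma$ only in the tiny piece $W_\Gamma\cap(\text{bulge near the root of }\Gamma)$, but the local dynamics at that root --- repelling, or, by the fact above, repelling within one petal in the parabolic case --- expels any such point from the bulge into $W_\Gamma\setminus V$, a contradiction. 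The Douady--Hubbard straightening theorem then yields a degree-$d'$ polynomial $Q$ with $P|_U$ hybrid equivalent to $Q$ near $K_Q$, the conjugacy carrying $A_P(\Zc)$ onto $K_Q$.

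\emph{The degree drop.} Finally $d'=d$ exactly when $P^{-1}(V)=U$, and in Böttcher coordinates this forces the union $\bigcup_{\Gamma\in\Zc}I_\Gamma\subset\R/\Z$ of the arcs of angles cut off by the wedges to be forward invariant under $\theta\mapsto d\theta$ (up to the negligible modifications caused by the bulges). But under $\theta\mapsto d\theta$ an arc of length $<1/d$ has its length multiplied by $d$, while an arc of length $\ge 1/d$ maps onto all of $\R/\Z$; hence a nonempty finite union of nondegenerate arcs cannot be forward invariant unless it equals $\R/\Z$, which here it does not, since $\rt_\Zc\subset A_P(\Zc)$ avoids the wedges. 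Therefore $d'<d$ as soon as $\Zc$ contains a nondegenerate cut, i.e. as soon as $\bigcup\Wc_\Zc\ne\emptyset$. (Equivalently: Riemann--Hurwitz gives $d'=1+\#(\text{critical points of }P\text{ in }U)$, so $d'<d$ just says that some critical point of $P$ lies outside $U$.) I expect the real work to be in the second step: manufacturing the bulges --- in particular, at parabolic root points, out of Fatou coordinates --- so that $V$ is genuinely a Jordan domain with $\ol U\subset V$ and induced filled Julia set precisely $A_P(\Zc)$; the outward-repelling hypothesis is exactly what keeps the bulges on the correct side of the cuts.
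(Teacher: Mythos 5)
Your proof is correct and fills in the ``classical arguments'' the paper invokes without elaboration: it is the standard thickening of the equipotential disk with the closed wedges deleted, i.e. the same construction the paper itself carries out in Section~\ref{ss:pc} to produce a polynomial-like restriction. You correctly isolate the only non-routine point, namely that an outward repelling parabolic root forces the wedge into a single repelling petal so that a repelling Fatou coordinate yields the needed bulge, and your remark that the degree drop actually requires $\Zc$ to contain a nondegenerate cut rather than merely $\Zc\ne\0$ is a valid (if harmless) sharpening of the statement as printed, since $\bigcup\Wc_\Zc=\0$ gives $A_P(\Zc)=K_P$ and $\deg Q=d$.
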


Observe that both Definition \ref{d:pl} and Theorem \ref{t:pl} allow for the degree one case; in that case
$A_P(\Zc)$ is a repelling fixed point.

The following theorem is a constructive version of the Main Theorem.
It generalizes Theorem \ref{t:pl} to certain cases, where one cannot hope for a polynomial-like behavior.

\begin{thm}
  \label{t:main-a}
Consider an admissible collection of cuts $\Zc$.
Suppose that $A_P(\Zc)$ is connected,  every critical point of $\rt_\Zc$ is eventually mapped to a repelling periodic orbit,
 and no point of $\rt_\Zc$ is outward parabolic.
Then either $A_P(\Zc)$ is a singleton, or $P|_{A_P(\Zc)}$ is quasi-symmetrically conjugate to $Q|_{K_Q}$,
 where $Q$ is a polynomial of degree greater than one.
If $\Zc\ne\0$, then the degree of $Q$ is less than $d$.
Moreover, the conjugacy can be arranged to preserve the complex structure almost everywhere on $A_P(\Zc)$.
\end{thm}

The general case of the Main Theorem can (and will) be reduced to Theorem \ref{t:main-a}.
If $P|_{A_P(\Zc)}$ as in Theorem \ref{t:main-a} is injective, then $A_P(\Zc)$ is a singleton by Theorem \ref{t:1-1}.
Note that a quasi-symmetric conjugacy is in particular a topological conjugacy.

Fig. \ref{fig:apl1} illustrates $\Wc_\Zc$ and $A_P(\Zc)$ for a specific cubic polynomial $P$.
Namely, $P(z)=z(z+2)^2$, and the set $A_P(\Zc)$ for $P(z)=z(z+2)^2$ is shown in dark grey.
Here $\Wc_\Zc$ consists of a single wedge $W$ (highlighted on the left) whose boundary is mapped to $R_P(0)$.
The boundary rays of $W$ are $R_P(1/3)$ and $R_P(2/3)$, and the root point $a=-2$ of $W$ maps to the fixed point $0$.
By Theorem \ref{t:main-a}, the filled Julia set $K_P$ consists of a copy of $K_Q$, where $Q(z)=-z+z^2$,
 and countably many decorations.
The parabolic point $0$ of $Q$ corresponds to the parabolic point $-1$ of $P$ of the same multiplier.
The maps $P|_{A_P(\Zc)}$ and $Q|_{K_Q}$ are topologically conjugate, but $A_P(\Zc)$ is not a PL filled Julia set.

\subsection{Analogs and extensions}
Branner and Douady \cite{BD88} consider the space $\Fc^+$ of cubic polynomials $P_a(z)=z(z-a)^2$
 (in a different coordinate) such that $R_{P_a}(0)=0$.
They suggested a surgery that relates cubic polynomials from $\Fc^+$ to quadratic polynomials from
 the $1/2$-limb of the Mandelbrot set $\Mc_2$.
There is a connection with our Main Theorem in the special case considered in \cite{BD88}.
Given $P\in\Fc^+$ and $\Zc=\{R_{P}(1/3)\cup R_P(2/3)\cup\{a\}\}$, Theorem \ref{t:main-a} produces
 a quadratic polynomial $Q$ such that $P|_{A_P(\Zc)}$ is topologically conjugate to $Q|_{K_Q}$.
In \cite{BD88}, a different but closely related quadratic polynomial $Q'$ is produced.
Namely, $Q'$ is in the $1/2$-limb of $\Mc_2$ and is such that
 the first return map to the side of $\ol{R_{Q'}(1/3)\cup R_{Q'}(2/3)}$
 containing $R_{Q'}(0)$ is a result of a specific surgery applied to $P$ and $\Zc$.
Our $Q$ is then a renormalization of $Q'$.
Methods employed in the proof of the Main Theorem generalize those of Branner and Douady.
A recent extension in a different direction is given in \cite{DLS20}.

If $\rt_\Zc$ is allowed to include outward parabolic points, then the situation appears to be more involved.
One cannot hope to extend the Main Theorem in its present form,
 as the QS geometry of the pieces of $A_P(\Zc)$ ``squeezed'' in cusps of parabolic domains
 is different from the QS geometry of a polynomial filled Julia set near a repelling periodic point.
On the other hand, a topological rather than QS conjugacy may still exist.
David maps can be used for a transition between parabolic and repelling periodic points (see \cite{hai98,BF14}).
Lomonaco in \cite{Lom15} introduced the theory of parabolic-like maps.
The corresponding straightening theorem is applicable to an admissible collection $\Zc=\{\Ga\}$
 of just one cut $\Ga$ with a fixed parabolic root point;
 it replaces the complement of $A_P(\Zc)$ with a single parabolic domain.
However, the latter surgery does not change parabolic dynamics to repelling one.

\subsection{Plan of the paper}
Section \ref{s:apl}
discusses the notion of transversality and its relationship with quasi-symmetric maps.
In Section \ref{s:rep} we reduce
 Theorem \ref{t:main-a} to the case when $\Zc$ has specific properties (e.g.,
one may assume that all periodic cuts in $\Zc$ are degenerate); this is done with the help of classical theory of polynomial-like maps of Douady and Hubbard.
The proof of the Main Theorem is given in Section \ref{s:plan}, where $P$ is replaced with a quasi-regular map $f$
 such that $P=f$ on $K_P$, and $f$ repels points off $K_P$.
Straightening the map $f$ using Theorem \ref{t:sw} yields Theorem \ref{t:main-a}.
Section \ref{s:fk-apl} deduces the Main Theorem and Corollaries \ref{c:main1} and \ref{c:Y_U} from Theorem \ref{t:main-a}.

\section{Transversality}
\label{s:apl}
Consider two simple arcs $R$, $L\subset\C$ sharing an endpoint $a$ and otherwise disjoint.
The arcs $R$, $L$ are \emph{transverse} at $a$ if, for any sequences $u_n\in R$ and $v_n\in L$ converging to $a$,
$$
\frac{u_n-a}{v_n-a}\not\to 1.
$$
Transversality is related to the notion of a quasi-arc as the following lemma explicates.

\begin{lem}
  \label{l:qa-trans}
Let simple arcs $R$, $L$ share an endpoint $a$ and be otherwise disjoint. If $R\cup L=I$ is a
quasi-arc, then $R$ and $L$ are transverse.
\end{lem}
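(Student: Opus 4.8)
The plan is to argue by contraposition: assuming $R$ and $L$ are \emph{not} transverse at $a$, I will produce a violation of the quasi-arc inequality \eqref{eq:qa} for $I = R\cup L$. Failure of transversality means there exist sequences $u_n\in R$ and $v_n\in L$ with $u_n\to a$, $v_n\to a$, and $(u_n-a)/(v_n-a)\to 1$. After a translation we may assume $a=0$, so $u_n/v_n\to 1$; equivalently $|u_n-v_n| = o(|u_n|)$ and $|u_n-v_n| = o(|v_n|)$, and also $|u_n|/|v_n|\to 1$. The geometric picture is that the point $a$, which lies ``between'' $R$ and $L$ along the arc $I$, is much closer to the far endpoints $u_n$, $v_n$ than $u_n$ and $v_n$ are to each other — precisely the configuration the quasi-arc condition forbids.

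First I would fix a homeomorphic parametrization $\xi:[0,1]\to I$. Since $a$ is the common endpoint of $R$ and $L$ and the two arcs are otherwise disjoint, $I$ is a simple arc and $a$ is an interior point of $I$, say $a = \xi(t_0)$ with $t_0\in(0,1)$; one of the two arcs, say $R$, is $\xi([0,t_0])$ and the other is $\xi([t_0,1])$ (up to relabeling and reversing orientation, which does not affect \eqref{eq:qa}). Write $u_n = \xi(s_n)$ with $s_n\le t_0$ and $v_n = \xi(r_n)$ with $r_n\ge t_0$; continuity of $\xi$ forces $s_n\to t_0$ and $r_n\to t_0$ (using that $\xi^{-1}$ is continuous and $u_n,v_n\to a=\xi(t_0)$). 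Now apply \eqref{eq:qa} with the ordered triple $x = s_n \le y = t_0 \le z = r_n$: this gives
\[
|\xi(s_n) - \xi(r_n)| \ge C\,|\xi(s_n) - \xi(t_0)|,
\]
i.e. $|u_n - v_n| \ge C\,|u_n - a| = C\,|u_n|$. But we arranged $|u_n - v_n| = o(|u_n|)$, so for large $n$ this is a contradiction (as $|u_n|\ne 0$ for large $n$, since $u_n\to a$ along a simple arc and $u_n\ne a$). Hence $R$ and $L$ must be transverse.

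The only genuinely delicate point is the bookkeeping with the parametrization $\xi$: one must be sure that $a$ really is an \emph{interior} point of the combined arc $I$ and that $u_n$ (on $R$) and $v_n$ (on $L$) land on opposite sides of the parameter $t_0$, so that the triple $(s_n, t_0, r_n)$ is genuinely monotone and \eqref{eq:qa} applies with $a = \xi(t_0)$ as the middle point. This is immediate from the hypothesis that $R$ and $L$ share exactly the endpoint $a$ and are otherwise disjoint, but it should be stated. One should also note that the quasi-arc constant $C$ is symmetric in the sense that \eqref{eq:qa} with the reversed parametrization gives $|\xi(x)-\xi(z)|\ge C|\xi(y)-\xi(z)|$ as well; this symmetric form is what lets us conclude regardless of which of $R$, $L$ plays the role of the initial sub-arc. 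Everything else is a routine limit argument, so I expect no real obstacle beyond this setup.
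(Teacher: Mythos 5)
Your proof is correct and follows the same contradiction argument as the paper: you pass from $(u_n-a)/(v_n-a)\to 1$ to $|u_n-v_n|=o(|u_n-a|)$ and observe this contradicts the quasi-arc inequality applied to the ordered triple $(u_n,a,v_n)$, which is exactly the paper's reasoning (the paper normalizes by $|v_n-a|$ rather than $|u_n-a|$, but this is immaterial). Your extra bookkeeping with the parametrization $\xi$ and the symmetry of the quasi-arc constant is sound, just more explicit than the paper's terse version.
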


\begin{proof}
By way of contradiction, suppose that
$$
\frac{u_n-a}{v_n-a}\to 1.
$$
for some $u_n\in R$ and $v_n\in L$ such that $u_n$, $v_n\to a$.
It follows that
$$
\frac{u_n-v_n}{v_n-a}\to 0.
$$
This contradicts the inequality $|v_n-u_n|\ge C|v_n-a|$ with $C>0$ from the definition of a quasi-arc.
\end{proof}

\begin{prop}
\label{p:invt}
Consider a simple arc $R$ such that the image $R'$ of $R$ under $w\mapsto w^k$ with $k>1$ is a simple arc,
$0$ is an endpoint of $R$, and $\lambda R'\supset R'$ for some $\lambda\in\C$ with $|\lambda|>1$.
Then $R$ is transverse to $\zeta R$ at $0$, for every $k$-th root of unity $\zeta\ne 1$.
\end{prop}

\begin{proof}
The map $w\mapsto w^k$ is injective on the arc $R$.
Indeed, by our assumption, $R'$ is a simple arc; a locally injective continuous map from an interval to an interval is injective.
Thus, $R$ and $\zeta R$ share only the endpoint $0$.
By way of contradiction, assume that $v_n/u_n\to \zeta$, where $u_n$, $v_n\in R$ and $u_n$, $v_n\to 0$.
Passing to a subsequence and choosing positive integers $m_n$ properly, we may assume that $\lambda^{m_n}u^k_n\to u\ne 0$, where $u\in R'$ is not an endpoint of
$R'$.
Then also $\lambda^{m_n}v^k_n\to u$.
Let $I_n$ be the segment of $R$ connecting $u_n$ and $v_n$.
Then the corresponding segment $I'_n$ of $R'$ connects $u^k_n$ with $v^k_n$.
Consider the arc $T_n=\lambda^{m_n}I'_n\subset R'$; its endpoints $\lambda^{m_n}u^k_n$ and $\lambda^{m_n}v^k_n$
converge to $u$ but the arc itself has diameter bounded away from  $0$ as it
makes one or several loops around $0$
(if $S_n$ is the union of $T_n$ and the straight segment connecting its endpoints
 then, since  $v_n/u_n\to \zeta$, the loop $S_n$ has a nonzero winding number with respect to $0$).
Since sets $T_n$ are subarcs of $R'$, in the  limit they converge to a nondegenerate loop in $R'$,
a contradiction.
\end{proof}

The following is a typical application of Proposition \ref{p:invt}.
Let $P$ be a polynomial; consider a repelling fixed point $a$ of $P$ and an invariant ray $R_P(\ta')$ landing at $a$.
Set $\lambda=P'(a)$, that is, $\lambda$ is the multiplier of the fixed point $a$.
Then, in some local coordinate $y$ near $a$, we have $y=0$ at $a$, and $P$ coincides with $y\mapsto\lambda y$.
On the other hand, suppose that a ray $R_P(\ta)$ maps to $R_P(\ta')$ under $P$.
Write $b$ for the landing point of $R_P(\ta)$, and assume that $P$ has local degree $k>1$ at $b$
(thus, $b$ is critical).
Then, in some local coordinate $x$ near $b$ combined with the local coordinate $y$ near $a$,
the map $P$ looks like $y=x^k$, and $b$ is the point where $x=0$.
We can now define $R$ as an arc of $\ol R_P(\ta)$ connecting $b$ with some point of $R_P(\ta)$.
Set $R'=P(R)$.
Then Proposition \ref{p:invt} is applicable to arcs $R$, $R'$ and the chosen local coordinates.
It claims that $R$ is transverse to all other $P$-pullbacks of $R'$ originating at $b$.

\begin{lem}
\label{l:RL-qs}
Consider simple arcs $R,$ $L$ such that their respective images $R'$, $L'$ under $x\mapsto x^k$ with $k>1$ are simple arcs,
$0$ is an endpoint of $R$ and of $L$, and $\lambda R'\supset R', \lambda L'\supset L'$ for some $\lambda\in\C$ with $|\lambda|>1$.
If $R'\cap L'=\{0\}$ and $R'$, $L'$ are transverse, then the restriction of $x\mapsto x^k$ to $R\cup L$ is QS.
\end{lem}

If $R'\cup L'$ is a quasi-arc, then, by Lemma \ref{l:RL-qs}, the arc $R\cup L$ is also a quasi-arc.
This observation will be useful in what follows.

\begin{proof}
We will prove that $x\mapsto x^k$ restricted to $R\cup L$ is weakly QS.
Assume, by way of contradiction, that there are three sequences $u_n$, $v_n$, $w_n\in R\cup L$ such that
$$
\Delta_n=\frac{u^k_n-v^k_n}{u^k_n-w^k_n}\to\infty,\quad |u_n-v_n|\le |u_n-w_n|.
$$
Assume that $u_n\to u$, $v_n\to v$ and $w_n\to w$ by passing to subsequences.
If $u\ne w$, then $\Delta_n$ would be bounded since $x\mapsto x^k$ is injective on $R\cup L$, a contradiction.
Thus $u=w$. Since  $|u_n-v_n|\le |u_n-w_n|$, it follows that  $u=v=w$.
As $x\mapsto x^k$ is locally QS away from the origin by the Koebe distortion theorem (being locally univalent), the only possibility for
$\Delta_n\to \infty$  is that $u=0$ holds, that is, the three sequences converge to $0$.
We may also assume that $\delta_n=(u_n-v_n)/(u_n-w_n)\to \delta$ with $|\delta|\le 1$
 by passing to a subsequence.

Note that $\Delta_n\to\infty$ enables us to assume as well that $u_n\ne 0$ for all $n$. Set $\ol v_n=v_n/u_n$ and $\ol w_n=w_n/u_n$.
By our assumptions,
$$
|1-\ol v_n|\le |1-\ol w_n|,\quad \delta_n=\frac{1-\ol v_n}{1-\ol w_n}\to\delta.
$$

If $(\ol w_n)_n$ tends to $\infty$, then $\ol v_n/\ol w_n\to \delta$ and
$$
\Delta_n=\frac{1-\ol v_n^k}{1-\ol w_n^k}=\frac{\ol w_n^{-k}-(\ol v_n/\ol w_n)^k}{\ol w_n^{-k}-1}\to \delta^k,
$$
a contradiction. So we may assume that   $(\ol w_n)_n$ is convergent towards a complex number $ \ol w$. It follows that  $(\ol v_n)_n$ is also
convergent, towards the complex number $\ol v = 1- \delta(1-\ol w)$.
Observe that
$$
\Delta_n=\frac{1-\ol v_n^k}{1-\ol w_n^k}=
\delta_n\frac{1+\ol v_n+\dots+\ol v_n^{k-1}}{1+\ol w_n+\dots+\ol w_n^{k-1}}.
$$
If $\ol w=1$ then $\ol v=1$ (because $|1-\ol v|\le |1-\ol w|$), and, then, we would get
$\Delta_n\to \delta$, a contradiction with $\Delta_n\to\infty$.
Therefore,  $\ol w$ is different from $1$.
Now $\Delta_n\to \infty$ implies that $\ol w (\ne 1)$ is a $k$-th root of unity.
 Since $w_n/u_n\to\ol w$, Proposition \ref{p:invt} implies it is impossible that $w_n$, $u_n$ are both in $R$ or both in $L$ for infinitely many values of $n$
 for it would contradict that these arcs are transverse to their rotated images under $\ol w$.
Thus we may assume that $w_n\in R$ and $u_n\in L$.
However, since $\ol w_n^k \to 1$, this would imply that $R'$ and $L'$ are not transverse, the final contradiction.
\end{proof}

\begin{thm}
  \label{t:qa}
Consider two simple arcs $R'$, $L'$ with common endpoint at $0$ and disjoint otherwise.
Suppose that $\lambda$ is a complex number with $|\lambda|>1$.
Furthermore, suppose that $\lambda R'\supset R'$ and $\lambda L'\supset L'$.
If $R'\cup L'$ is smooth except possibly at $0$, then $R'\cup L'$ is a quasi-arc.
\end{thm}

\begin{proof}
Assume the contrary: there are three sequences $x_n$, $y_n$, $z_n\in R'\cup L'$ such that
\begin{itemize}
  \item the point $y_n$ is always between $x_n$ and $z_n$ in the arc $R'\cup L'$
  (in particular, the three points $x_n$, $y_n$, $z_n$ are always different);
  \item we have $\delta_n=|x_n-z_n|/|x_n-y_n|\to 0$ as $n\to\infty$.
\end{itemize}
It follows from the second assumption that $|x_n-z_n|\to 0$ since the denominator is bounded.
We can now make a number of additional assumptions on $x_n$, $y_n$, $z_n$ by passing to subsequences.
Assume that $x_n$ and $z_n$ converge to the same limit.
If this limit is different from $0$, then straightforward geometric arguments yield a contradiction
(it is obvious that every closed subarc of $R'\cup L'$ not containing $0$ is a quasi-arc).
Thus we may assume that $x_n$, $z_n\to 0$.
Since $y_n$ is between $x_n$ and $z_n$, we also have $y_n\to 0$.
From now on, we rely on the assumption that all three sequences $x_n$, $y_n$, $z_n$ converge to $0$.
Assume that $x_n\ne 0$ for all $n$ (otherwise, for a suitable subsequence, $z_n\ne 0$ for all $n$, and we may interchange $x_n$ and $z_n$)
 and that $x_n\in R'$ rather than $L'$.

Take $r>0$ sufficiently small, and let $A$ be the annulus $\{z\in\C\,|\, r<|z|<|\lambda|r\}$.
For every $n$, there exists a positive integer $m_n$ such that $\lambda^{m_n}x_n\in\ol A$.
Set $x'_n$, $y'_n$, $z'_n$ to be $\lambda^{m_n}x_n$, $\lambda^{m_n}y_n$, $\lambda^{m_n}z_n$, respectively.
By the invariance property of $R'$, we must have $x'_n\in R'$.
Passing to a subsequence, arrange that $x'_n\to x\in\ol A$ as $n\to\infty$.
Since $|x'_n-z'_n|/|x'_n-y'_n|=\delta_n\to 0$, then
$z'_n\to x$.
Since the intersections of $R'$ and $L'$ with an open neighborhood of $\ol A$ are smooth open arcs,
 it follows that $z'_n\in R'$ for large $n$, hence $y'_n\to x$ and $\delta_n\not\to 0$.
A contradiction.
\end{proof}

\section{Reducing the admissible collection}
\label{s:rep}
Let $P$ and $\Zc$ be as in Theorem \ref{t:main-a}.
Let $c$ be a critical point of $P$.
A cut $\Ga=R\cup L\cup\{c\}$ formed by $c$ and two rays $R$, $L$ landing at $c$ such that $P(R)=P(L)$,
is a \emph{critical cut}. A pullback of a critical cut is a \emph{precritical cut}.

\subsection{Outline of the proof of Theorem \ref{t:main-a}}
The proof of Theorem \ref{t:main-a}, under the assumption that $P|_{A_P(\Zc)}$ is not injective, will go as follows.

\emph{Step 1: reduction}.
Take only the nondegenerate cuts from $\Zc$ with periodic root
points that are outward repelling.
These cuts define a polynomial-like Julia set by Theorem \ref{t:pl}.
Replacing $P$ with the straightening of the corresponding polynomial-like map
and using the conditions of Theorem \ref{t:main-a}, we may assume that
every periodic cut in $\Zc$ is degenerate and all cuts from $\Zc$
are 
precritical cuts that eventually map to degenerate cuts with
repelling periodic root points, and their images.
Step 1 is made in this section.

\emph{Step 2: carrot modification}.
After the reduction step, we assume that every periodic cut in $\Zc$ is degenerate and has repelling root point.
Terminal segments of these degenerate cuts can be fattened to so called \emph{carrots}.
Carrots are quasi-disks; they are almost the same as ``sectors'' used in \cite{BD88}.
Moreover, if $C$ is a carrot corresponding to a periodic cut, then $C\cap K_P$ is only the root point of the cut.
Carrots corresponding to preperiodic cuts are defined differently,
 and their union contains $K_P\sm A_P(\Zc)$.
Finally, we modify $P$ in carrots and  near infinity and obtain a quasi-regular polynomial $f$.
Application of Theorem \ref{t:sw} to $f$ concludes the proof of the Main Theorem.
Step 2 is made in the next section.

If $P|_{A_P(\Zc)}$ is injective, then $A_P(\Zc)$ is a point, which is proved in Theorem \ref{t:1-1}
 by a different (but simpler) method.

\subsection{Eliminating some periodic cuts}
\label{ss:pc}
Let $\Zc$ be an admissible collection of cuts.
It is a union of a finite family of forward orbits of cuts.
Among them there might exist degenerate cuts whose backward orbit
in $\Zc$ does not include nondegenerate cuts. These cuts make no
impact upon the avoiding set $A_P(\Zc)$ (recall that it was defined in Definition \ref{d:avoid})
and will be called \emph{fictitious}.
Start by removing all fictitious cuts.
Thus, from now on, we assume that there are no fictitious cuts left.

\begin{dfn}\label{d:legal}
An admissible family of non-fictitious cuts is \emph{legal} if it is a union of
finite orbits of (pre)critical cuts each of which eventually maps to a degenerate cut
with repelling periodic root point.
\end{dfn}

Define $\Zc_{pc}\subset \Zc$ as the subset consisting of all periodic nondegenerate cuts $\Ga\in\Zc$.
(Here ``pc'' is from ``\textbf{p}eriodic \textbf{c}uts''.)
An admissible family of cuts is legal if $\Zc_{pc}=\0$, and all critical root points eventually map to repelling
periodic points.

Let $V_E$ be a closed Jordan disk around $K_P$ bounded by an equipotential curve.
A standard thickening of $V_E\sm \bigcup_{\Ga\in \Zc_{pc}} W_\Ga$ yields an open Jordan disk $V$
 and a polynomial-like map $P:U\to V$, where $U$ is the component of $P^{-1}(V)$ containing $A_P(\Zc)$.
The thickening construction first appeared in \cite{dou86} and, since then, was used in a variety of contexts 
 (see, e.g., \cite{hai00,kiw97,mil00}).
In our case, the statement follows from more general Lemma 5.13 of \cite{IK12}.
Below, we give a sketch of an argument in our setting, omitting some technical details.

The collection $\Zc_{pc}$ of cuts consists of one or several cycles under the action of $P$.
Choose one cycle of cuts $\Ga_0$, $\dots$, $\Ga_{m-1}$ in $\Zc_{pc}$, and let
 $W_0$, $\dots$, $W_{m-1}$ be the corresponding wedges.
The numbering can be arranged so that $P(\Ga_j)=\Ga_{j+1\pmod m}$ for $j=0$, $\dots$, $m-1$.
By our assumption, the root points $a_j$ of $W_j$ are repelling periodic points.
Let $p$ be the minimal period of $a_0$.
This number divides $m$ but may be smaller.

\begin{lem}
  \label{l:disks-hyp}
For $i=0$, $\dots$, $p-1$, there are disjoint Jordan disks $D_i$ around $a_i$ with the following properties.
The pullback of $D_{i+1\pmod p}$ containing $a_i$ is compactly contained in $D_i$.
The map $P$ is injective on each of $\ol D_i$, and $P(D_i)$ is disjoint from all $D_j$ with
 $j\ne i+1\pmod p$.
\end{lem}

\begin{proof}
Let $D_p$ be a sufficiently small round disk around $a_0$.
Then $P^p$ is injective on $D_p$, all $P^i(D_p)$ are disjoint for $i=0$, $\dots$, $p-1$, and $D_p\Subset P^p(D_p)$.
It follows that the $P^p$-pullback of $D_p$ containing $a_0$ is compactly contained in $D_p$.
For each $i=0$, $\dots$ $p-1$, consider the $P$-pullback of $D_{i+1}$ containing $a_i$;
 define $D_i$ as a tight neighborhood of this pullback.
Here a \emph{tight neighborhood} means a subneighborhood in the $\eps$-neighborhood for sufficiently small $\eps>0$.
Then we have $D_0\Subset D_p$ provided that all chosen neighborhoods were sufficiently tight.
Since a pullback of $D_p$ is compactly contained in $D_{p-1}$, a pullback of $D_0$ is so as well.
The fact that $D_i$ are disjoint and the very last claim both follow from out assumption that $P^i(D_p)$ are disjoint.
\end{proof}

The standard thickening $V$ of the continuum
$$
V' = V_E\sm\bigcup_{\Ga\in\Zc_{pc}} W_\Ga
$$
(recall that $V_E$ is closed) is now obtained as a tight Jordan neighborhood of $V'$, whose boundary
 consists of: arcs of some equipotential close to $E$, segments of external rays in $W_\Ga$
 close to $\Ga$, where $\Ga\in\Zc_{pc}$, boundary arcs of the disks constructed in Lemma \ref{l:disks-hyp}
 (or perhaps smaller disks with the same property but having simplest possible intersections with the rays).
Let $U$ be the $P$-pullback of $V$ containing $\Ac_P(\Zc)$.
Then $P:U\to V$ is the desired PL map.
Indeed, each piece of the boundary $\bd(V)$ has its pullbacks inside $V$, by definition,
 which implies that $U\Subset V$.

By Theorem \ref{t:DH-pl},
the PL-map $P:U\to V$ is hybrid equivalent (by a map $\psi$) to $\tilde P:\tilde U\to \tilde V$,
 where $\tilde P$ is a polynomial and $\tilde U$ is a neighborhood of its filled Julia set.

 \begin{lem}
  \label{l:rem-pc}
There is a legal family $\tilde\Zc$ of cuts in the dynamical plane of $\tilde P$
such that $A_{\tilde P}(\tilde\Zc)=\psi(A_P(\Zc))$.
\end{lem}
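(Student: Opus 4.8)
The plan is to push $\Zc$ forward through the hybrid equivalence $\psi$, realizing the images of the cuts by genuine external rays of $\tilde P$. Let $K$ denote the filled Julia set of the polynomial-like map $P:U\to V$; it equals $A_P(\Zc_{pc})$, is connected, contains $A_P(\Zc)$, and $\psi$, defined on a neighborhood of $K$, maps $K$ onto $K_{\tilde P}$ and conjugates $P$ to $\tilde P$ there. Two features of $\psi$ will be used. First, being a topological conjugacy near $K$, it preserves local degrees and carries repelling cycles to repelling cycles (parabolicity being visible topologically), so the critical points of $\tilde P$ in $K_{\tilde P}$ are exactly the $\psi$-images of those of $P$ in $K$, with the same local degrees. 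Second, since $K$ is forward invariant and disjoint from every wedge of $\Zc_{pc}$, while $P^n$ maps $W_\Ga$ onto $W_{P^n(\Ga)}$, we have $W_\Ga\cap K=\emptyset$ whenever the forward $P$-orbit of $\Ga\in\Zc$ meets $\Zc_{pc}$; conversely, as a non-constant polynomial cannot collapse a non-degenerate continuum, $W_\Ga\cap K\ne\emptyset$ forces $W_{P(\Ga)}\cap K\ne\emptyset$. (We take for granted that periodic root points of cuts in $\Zc$ are repelling; parabolic ones would require a separate surgery.)

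For $\Ga=R\cup L\cup\{a\}\in\Zc$ I would define $\widetilde\Ga$ as follows. Since $a\in\Ga\cap K_P\subseteq A_P(\Zc)\subseteq K$, the point $\psi(a)$ is defined and, by the hypotheses of the Main Theorem, is eventually mapped to a repelling cycle of $\tilde P$. Transporting through $\psi$ the germs of $R$ and $L$ at $a$ (which lie in the domain of $\psi$) gives two curve germs at $\psi(a)$ accessing $K_{\tilde P}$ from its complement. The heart of the matter, and what I expect to be the main obstacle, is to show that each such germ is homotopic --- rel $\psi(a)$, inside $\ol\C\sm K_{\tilde P}$ --- to a unique external ray of $\tilde P$ landing at $\psi(a)$. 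After finitely many iterates the germ of $R$ becomes a periodic access germ at a repelling periodic point $z$ of $P$, invariant under the first return map; its $\psi$-image is then an invariant access germ at the repelling periodic point $\psi(z)$ of $\tilde P$, and linearizing $\tilde P$ at $\psi(z)$, together with the standard description of the local structure of a Julia set at a repelling periodic point, shows that all invariant access germs there are homotopic to external rays of $\tilde P$; pulling back (preimages of external rays are external rays, and $\psi$ intertwines $P$ and $\tilde P$) yields the assertion for $\psi(R)$ and $\psi(L)$. Let $\tilde R,\tilde L$ be the corresponding rays of $\tilde P$; because $\psi$ respects the cyclic order of accesses at $a$ and of the branches of $K\sm\{a\}$, one has $\tilde R\ne\tilde L$ precisely when $W_\Ga\cap K\ne\emptyset$. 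Put $\widetilde\Ga=\tilde R\cup\tilde L\cup\{\psi(a)\}$, oriented so that $W_{\widetilde\Ga}$ lies on the $\psi(W_\Ga)$-side; thus $\widetilde\Ga$ is degenerate exactly when $W_\Ga\cap K=\emptyset$ --- consistently, as every non-degenerate cut of $\tilde P$ has both wedges meeting $K_{\tilde P}$ --- and by construction
\[
  W_{\widetilde\Ga}\cap K_{\tilde P}=\psi\bigl(W_\Ga\cap K\bigr),\qquad \tilde P(\widetilde\Ga)=\widetilde{P(\Ga)}
\]
(the second identity because realizing accesses by rays commutes with the dynamics). Finally let $\tilde\Zc$ be the collection of all $\widetilde\Ga$, $\Ga\in\Zc$, with fictitious cuts deleted.

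It remains to verify the conclusions. Invariance and admissibility of $\tilde\Zc$ follow from $\tilde P(\widetilde\Ga)=\widetilde{P(\Ga)}$ and from $\psi$ faithfully reproducing the arrangement of cuts near the Julia set. For $x\in K$ one has $P^n(x)\in K$ for all $n$, so $x\in A_P(\Zc)$ iff $P^n(x)\notin\bigcup_{\Ga\in\Zc}(W_\Ga\cap K)$ for every $n\ge 0$; applying the homeomorphism $\psi$ and the displayed identity converts this into $\psi(x)\in A_{\tilde P}(\tilde\Zc)$, so $\psi(A_P(\Zc))=A_{\tilde P}(\tilde\Zc)$ (deleting fictitious cuts changes neither side). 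As for legality: $\tilde\Zc$ has no fictitious cuts by construction; it has no periodic non-degenerate cuts, since a periodic $\widetilde\Ga$ comes from a periodic $\Ga\in\Zc$, and a periodic non-degenerate $\Ga$ lies in $\Zc_{pc}$, so $W_\Ga\cap K=\emptyset$ and $\widetilde\Ga$ is degenerate; the root $\psi(a)$ of such a periodic degenerate cut is repelling; and every $\widetilde\Ga$ with non-empty wedge has a forward orbit that, by the observations in the first paragraph, stays non-degenerate until it meets a critical cut --- forced by the criticality of a point of $\rt_{\tilde\Zc}$, the image of a critical point of $\rt_\Zc$ which is eventually mapped to a repelling cycle --- after which the orbit consists of degenerate cuts ending in a repelling periodic one. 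Thus $\tilde\Zc$ is a legal family with $A_{\tilde P}(\tilde\Zc)=\psi(A_P(\Zc))$, as required.
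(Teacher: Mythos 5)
The paper gives no proof of this lemma (it says ``Lemma~\ref{l:rem-pc} is left to the reader''), so there is nothing in the text to compare your argument against. Your route---push each cut of $\Zc$ through the hybrid equivalence $\psi$ by transporting its access germs at the root point and realizing them as external rays of $\tilde P$, with a cut declared degenerate exactly when the corresponding wedge misses $K$---is the natural one, and the overall structure of your argument is sound. In particular you correctly identify the key dynamical mechanism behind legality: a nondegenerate $\widetilde\Ga$ cannot have a forward orbit consisting only of nondegenerate cuts, since that would force it into $\widetilde\Zc_{pc}$ (and $\widetilde\Zc_{pc}=\emptyset$ because $W_{\Ga'}\cap K=\emptyset$ for $\Ga'\in\Zc_{pc}$), so the orbit must degenerate at a critical cut, and the Main Theorem hypothesis on critical points of $\rt_\Zc$ then makes the terminal periodic root repelling.

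Two places deserve more care. First, your parenthetical ``We take for granted that periodic root points of cuts in $\Zc$ are repelling'' should be deleted, not taken for granted: the Main Theorem explicitly permits outward-repelling periodic roots that are parabolic (with all petals inside $A_P(\Zc)$). What saves you is exactly the observation you make elsewhere: such roots occur only on cuts $\Ga\in\Zc_{pc}$ and on cuts whose whole orbit stays nondegenerate, and all of these have $W_\Ga\cap K=\emptyset$, so the corresponding $\widetilde\Ga$ are degenerate and drop out as fictitious. The surviving periodic roots in $\tilde\Zc$ are all eventual images of critical points of $\rt_\Zc$, hence repelling; no separate parabolic surgery is needed. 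Second, the assertion ``$P^n$ maps $W_\Ga$ onto $W_{P^n(\Ga)}$'' is false in general (the image can be strictly larger, and $W_{P(\Ga)}$ may be empty), and the stated ``converse'' omits the caveat that $P(\Ga)$ must be nondegenerate. The statement you actually need, and which does hold, is that when $P(\Ga)$ is nondegenerate, a component of $K\setminus\{z_\Ga\}$ contained in $W_\Ga$ has, near $z_\Ga$, its $P$-image landing inside $W_{P(\Ga)}$ near $P(z_\Ga)$ (by the local orientation-preserving correspondence of accesses), so that $W_\Ga\cap K\ne\emptyset$ implies $W_{P(\Ga)}\cap K\ne\emptyset$. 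Phrased this way the dichotomy in your first paragraph, and hence the legality of $\tilde\Zc$, goes through.
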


Lemma \ref{l:rem-pc} is left to the reader.
If Theorem \ref{t:main-a} is proved for $\tilde P$ and $A_{\tilde P}(\tilde\Zc)$, then it would follow for $A_P(\Zc)$.
This reduces the Main Theorem to the case of legal families of cuts.

\section{Carrots}
\label{s:plan}
From now on we assume that $\Zc$ is a legal family of cuts, with no fictitious cuts.

\begin{figure}
  \centering
  \includegraphics[width=5cm]{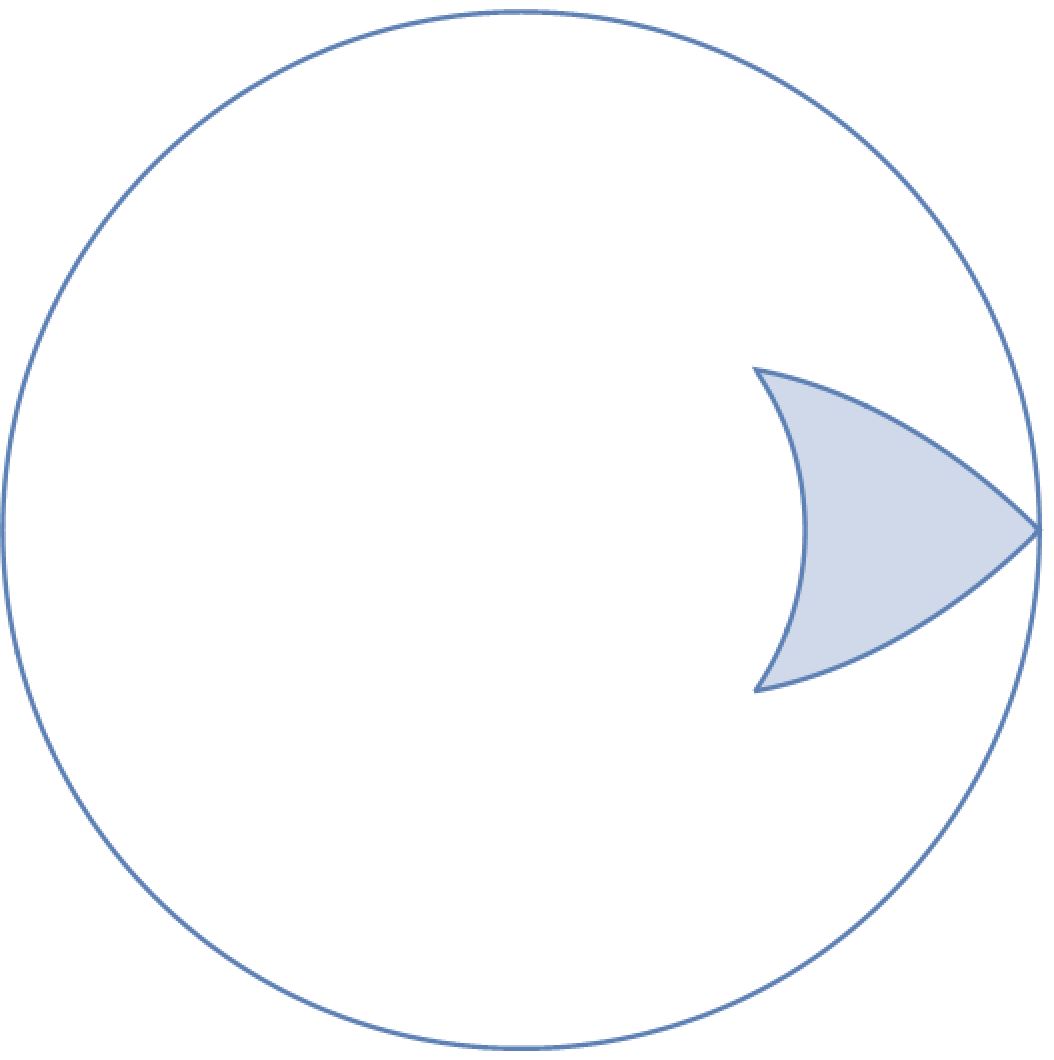}
  \caption{A proto-carrot.}\label{fig:pcarrot}
\end{figure}

\begin{dfn}[Prototype carrot]
The \emph{prototype carrot}, or simply \emph{proto-carrot} $\c(\rho_0,\ta_0)$, with polar parameters $(\rho_0,\ta_0)$
 is the ``triangular'' region in $\ol\disk$ given by the inequalities $\rho_0\le\rho\le e^{-|\ta-\ta_0|}$
 in the polar coordinates $(\ta,\rho)$.
Here $\ta$ is an \emph{angular coordinate} so that $\ta-\ta_0$ can be either positive or negative,
and $\rho$ is the \emph{radial coordinate}, i.e., the distance to the origin.
We assume that the parameter $\rho_0<1$ is close to $1$.
A proto-carrot is bounded by a circle arc and two symmetric segments of logarithmic spirals, see Figure \ref{fig:pcarrot}.
All proto-carrots are homeomorphic.
\end{dfn}

A part of the boundary of $\c(\rho_0,\ta_0)$ near point $e^{2\pi i\ta_0}$ is given by $\rho=e^{-|\ta-\ta_0|}$
and consists of two analytic curves meeting at $e^{2\pi i\ta_0}$ and invariant under the map $z\mapsto z^d$ (regardless of $d$).
Since rotation by $\ta_0$ composed with $z^d$ equals $z^d$ composed with the rotation by $d\cdot \ta_0$,
 the next proposition follows.

\begin{prop}
  \label{p:sid-td}
 Take any $\ta_0\in\R/\Z$.
The map $z\mapsto z^d$ takes $\c(\rho_0,\ta_0)$ to $\c(\rho_0^d,d\ta_0)$, provided that $\rho_0$ is close to $1$.
\end{prop}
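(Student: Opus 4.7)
The plan is to exploit the rotational equivariance that the authors already flagged and then reduce everything to a one-line computation in polar coordinates. First, set $R_{\alpha}(z)=e^{2\pi i\alpha}z$. The identity $(R_{\ta_0}z)^d=R_{d\ta_0}(z^d)$ gives
\[
   (\cdot)^d\circ R_{\ta_0}=R_{d\ta_0}\circ(\cdot)^d.
\]
Since $\c(\rho_0,\ta_0)=R_{\ta_0}(\c(\rho_0,0))$ and $\c(\rho_0^d,d\ta_0)=R_{d\ta_0}(\c(\rho_0^d,0))$, it suffices to prove the special case $\ta_0=0$, which is where the computation becomes transparent.

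Next, I would carry out the direct inclusion in polar coordinates. For any $(\ta,\rho)\in\c(\rho_0,0)$, by definition $|\ta|\le-\ln\rho_0$. Choose $\rho_0$ close enough to $1$ so that $d(-\ln\rho_0)<1/2$; then $|d\ta|<1/2$, so $d\ta$ has a canonical representative in $\R/\Z$ that lies in $(-1/2,1/2)$ and the usual inequality $|d\ta-0|=d|\ta|$ makes sense without any wrapping ambiguity. Setting $(\ta',\rho')=(d\ta,\rho^d)$, the defining inequalities transform as
\[
   \rho_0^d\le\rho'=\rho^d\le e^{-d|\ta|}=e^{-|\ta'|},
\]
so $(\ta',\rho')\in\c(\rho_0^d,0)$. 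This shows $(\c(\rho_0,0))^d\subset\c(\rho_0^d,0)$.

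For the reverse inclusion, given $(\ta',\rho')\in\c(\rho_0^d,0)$, note that $|\ta'|\le-\ln(\rho_0^d)=-d\ln\rho_0<1/2$, so $\ta=\ta'/d$ is well defined with $|\ta|<1/(2d)$. Setting $\rho=(\rho')^{1/d}$, we check $\rho_0\le\rho$ from $\rho_0^d\le\rho'$, and $\rho\le e^{-|\ta|}$ from $\rho'\le e^{-|\ta'|}=e^{-d|\ta|}$. Moreover, the restricted map $z\mapsto z^d$ is a homeomorphism onto its image on the small angular sector containing $\c(\rho_0,0)$ (its angular width is $2(-\ln\rho_0)<1/d$), so the preimage we produced is unique. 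Hence $\c(\rho_0^d,0)\subset(\c(\rho_0,0))^d$.

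There is no serious obstacle here; the only thing one has to watch is that angles live in $\R/\Z$, which forces the quantitative condition $-\ln\rho_0<1/(2d)$ (this is exactly what ``provided that $\rho_0$ is close to $1$'' means). Once that is in place, both containments are immediate from $(\ta,\rho)\mapsto(d\ta,\rho^d)$ matching the two defining inequalities term by term, and the general $\ta_0$ case follows by applying $R_{\ta_0}$ on the source and $R_{d\ta_0}$ on the target.
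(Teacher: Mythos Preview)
Your proof is correct and follows essentially the same approach as the paper: both exploit the rotation equivariance $(\cdot)^d\circ R_{\ta_0}=R_{d\ta_0}\circ(\cdot)^d$ to reduce to the case $\ta_0=0$, and then observe that the defining inequality $\rho\le e^{-|\ta|}$ is preserved under $(\ta,\rho)\mapsto(d\ta,\rho^d)$. The paper states this tersely (noting that the spiral boundary curves are invariant under $z\mapsto z^d$), while you write out both inclusions explicitly and make precise the smallness condition $-\ln\rho_0<1/(2d)$ that prevents angular wrapping; this extra care is welcome but does not change the underlying argument.
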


Let us recall the following concept.

\begin{dfn}[Stolz angle]
A \emph{Stolz angle} in $\disk$ at a point $u\in\uc$ is by definition a convex cone with apex at $u$
  bisected by the radius and with aperture strictly less than $\pi$.
\end{dfn}

The proto-carrot $\c(\rho,\ta_0)$ approaches the unit circle within some Stolz angle at $e^{2\pi i\ta_0}$
(the Implicit Function Theorem shows that the aperture of such Stolz angles can be made arbitrarily close to $\pi/2$).
The following theorem (see, e.g.,\cite[Theorem 2.2]{CG}) describes an important property of Stolz angles.

\begin{thm}
  \label{t:stolz}
Consider a simply connected domain $D\subset\ol\C$ that is not the sphere minus a singleton,
 and let $\psi:\disk\to D$ be a conformal isomorphism.
Suppose that a point $z_0\in\bd(D)$ is accessible from $D$.
Then there is a point $u_0\in\uc$ with the following property:
 $\psi(u)\to z_0$ as $u\to u_0$ inside any Stolz angle with apex at $u_0$.
\end{thm}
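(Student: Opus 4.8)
\medskip

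\noindent\emph{Proof proposal.}
The plan is to deduce this from the classical Lindel\"of theorem. First I would record that $\psi$ is a \emph{normal function}: since $D\subset\ol\C$ is simply connected, proper, and not the complement of a single point, the set $\ol\C\sm D$ is a connected compact set with more than one, hence with infinitely many, points, so $\psi$ omits at least three values and is normal by Montel's theorem. This substitutes for the boundedness hypothesis in the usual Lindel\"of theorem, which is unavailable here because $D$ may be unbounded; throughout, distances should be read in the spherical metric on $\ol\C$ when $z_0=\infty$.

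Next I would use accessibility: fix a curve $\ga:[0,1)\to D$ with $\ga(t)\to z_0$ as $t\to 1$ and set $\si=\psi^{-1}\circ\ga:[0,1)\to\disk$. If $\si(t_n)\to a$ for some $a\in\disk$ and some $t_n\to 1$, then $\ga(t_n)=\psi(\si(t_n))\to\psi(a)\in D$, contradicting $\ga(t)\to z_0\in\bd(D)$; hence $|\si(t)|\to 1$, and the cluster set $C\subset\uc$ of $\si$ as $t\to1$ is a nonempty closed connected subset of $\uc$, i.e.\ a point or a closed arc.

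The key step --- and the one I expect to be the main obstacle --- is to rule out the possibility that $C$ is a nondegenerate arc, thereby obtaining $\si(t)\to u_0$ for a single $u_0\in\uc$. If $C$ contained distinct points $\zeta_1\ne\zeta_2$, then for every small $\rho>0$ the curve $\si$ would cross the circular crosscut $\be_\rho=\{|z-\zeta_1|=\rho\}\cap\disk$ infinitely often as $t\to 1$ (it returns arbitrarily close to $\zeta_1$ and also to $\zeta_2\notin B(\zeta_1,\rho)$), so $z_0\in\ol{\psi(\be_\rho)}$ for all small $\rho$. I would then invoke the standard length--area estimate (Cauchy--Schwarz on $\be_\rho$ together with $\int_{\be_\rho}|\psi'|^2$ integrated against $d\rho/\rho$, bounded by the spherical area of $D$) to produce a sequence $\rho_n\to 0$ along which $\mathrm{diam}\,\psi(\be_{\rho_n})\to 0$; combined with $z_0\in\ol{\psi(\be_{\rho_n})}$ this forces the crosscuts $\psi(\be_{\rho_n})$ of $D$ to shrink to $z_0$. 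For large $n$ such a crosscut separates $D$ into two pieces containing, respectively, the $\psi$-images of the tail of $\si$ near $\zeta_1$ and the tail near $\zeta_2$, so $\ga=\psi\circ\si$ has points arbitrarily late on both sides; since $\ga$ is connected and converges to the point $z_0$ lying on these shrinking crosscuts, a short topological argument yields a contradiction. Hence $C=\{u_0\}$ and $\si(t)\to u_0$.

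Finally, $\psi$ has the asymptotic value $z_0$ at $u_0$, attained along the curve $\si$ terminating at $u_0$. Since $\psi$ is normal, I would conclude by quoting the Lehto--Virtanen form of Lindel\"of's theorem --- a normal function possessing an asymptotic value $L$ at a point of $\uc$ has $L$ as its nontangential limit there --- to get that $\psi(u)\to z_0$ as $u\to u_0$ within every Stolz angle with apex at $u_0$, which is the assertion.
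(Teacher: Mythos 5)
The paper does not give its own proof of this statement; it is quoted verbatim from Carleson--Gamelin \cite[Theorem 2.2]{CG}, so there is no internal argument to compare against. Your overall route (accessibility, pull back the access path, reduce to showing the pulled-back curve $\si$ lands at a single point $u_0\in\uc$, then invoke the Lehto--Virtanen/Lindel\"of theorem for normal functions) is the standard one and matches the Carleson--Gamelin treatment in outline; the remark that $\psi$ is normal because $\ol\C\sm D$ is a nondegenerate continuum is a clean way to handle unbounded $D$.

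The gap is in the step that rules out a nondegenerate cluster arc $C$. You correctly produce, via length--area, crosscuts $\be_{\rho_n}$ of $\disk$ around $\zeta_1$ whose images $\psi(\be_{\rho_n})$ have (spherical) diameter tending to $0$ and contain $z_0$ in their closures, and you correctly note that $\ga$ has late points on both sides. But the asserted ``short topological argument'' from there does not exist: the picture of a curve converging to $z_0$ while oscillating across a sequence of tiny crosscuts that all cluster at $z_0$ is not self-contradictory. The crosscut $\psi(\be_{\rho_n})$ need not cut off a piece of $D$ that is small or far from $z_0$ --- without local connectivity of $\bd D$, both components of $D\sm\psi(\be_{\rho_n})$ can come arbitrarily close to $z_0$, and the nested intersection $\bigcap_n\ol{\psi(L_n)}$ is merely a prime-end impression, which can be a nondegenerate continuum on $\bd D$. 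So connectivity alone gives nothing. What actually closes the argument is an extra use of holomorphicity. One clean route: for a.e.\ $\rho$ the crosscut $\psi(\be_\rho)$ is rectifiable, hence has two well-defined endpoints on $\bd D$; since $z_0\in\ol{\psi(\be_\rho)}$ and $z_0\in\bd D$ while $\psi(\be_\rho)\subset D$, the point $z_0$ must be one of these endpoints. Thus $\psi$ has asymptotic value $z_0$ along $\be_\rho$ at the corresponding endpoint $e_\rho^\pm\in\uc$ for a.e.\ small $\rho$; since $\be_\rho$ meets $\uc$ nontangentially and $\psi$ is normal, this upgrades to a nontangential limit $z_0$ at $e_\rho^\pm$, and the set $\{e_\rho^\pm\}$ has positive linear measure on $\uc$. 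Privalov's (or F.\ and M.\ Riesz's) uniqueness theorem then forces $\psi\equiv z_0$, a contradiction. (Equivalently: Fatou's theorem for univalent maps gives radial limits a.e., the radius at a.e.\ interior point of $C$ is crossed by $\si$ at arbitrarily late times, so $z_0$ is a radial limit a.e.\ on $C$, and again Privalov applies.) Without some such measure-theoretic/uniqueness input, the step you label ``short topological argument'' is a genuine hole.
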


The point $u_0$ from Theorem \ref{t:stolz} is uniquely defined once an access to $z_0$ from $D$ is chosen.
Let $E(\rho)$ be the equipotential $\psi_P(\{z\in\disk\mid |z|=\rho\})$.
Write $U(\rho)$ for the open Jordan domain bounded by $E(\rho)$.

\begin{dfn}[Carrots]
  \label{d:carrot}
Let $\Ga\in \Zc$ be a periodic degenerate cut with a repelling root point $z_\Ga$.
Let $\ta_\Ga\in \D$ be the angle corresponding to $\Ga$.
Define the \emph{carrot} $C_\Ga(\rho)$ as $\psi_P(\c(\rho,\ta_\Ga))$.
This is the closed triangular region bounded by three arcs $R_\Ga(\rho)$, $L_\Ga(\rho)$, and $E_\Ga(\rho)$.
Here $R_\Ga(\rho)$ and $L_\Ga(\rho)$ are simple topological arcs landing at $z_\Ga$;
 the latter follows from Theorem \ref{t:stolz}, in which the access to $z_\Ga$ from the basin of infinity
 is defined by a terminal segment of the ray $\Ga$.
The arc $E_\Ga(\rho)$ is a part of the equipotential curve $E(\rho)$.

It remains to define carrots for strictly preperiodic cuts in $\Zc$.
Take such a cut $\Ga\in \Zc$; we may assume by induction that the carrot $C_{P(\Ga)}(\rho^d)$ is already defined.
Let $W$ be the wedge corresponding to $\Ga$.
There is a unique pullback $R_\Ga(\rho)$ of $R_{P(\Ga)}(\rho^d)$ and a unique pullback $L_\Ga(\rho)$ of $L_{P(\Ga)}(\rho^d)$
with the following properties:
\begin{enumerate}
  \item both $R_\Ga(\rho)$ and $L_\Ga(\rho)$ land at $z_\Ga$, the root point of $\Ga$;
  \item the arc $E_\Ga(\rho)$ of $E(\rho)$ with endpoints $R_\Ga(\rho)\cap E(\rho)$ and $L_\Ga(\rho)\cap E(\rho)$
  is the smallest subarc of $E(\rho)$ which contains $W\cup \Ga$ and has endpoints in $P^{-1}(R_{P(\Ga)}(\rho^d)$
  and $P^{-1}(L_{P(\Ga)}(\rho^d))$.
\end{enumerate}
The set $C_\Ga(\rho)$ is then defined as a triangular region bounded by three arcs $R_\Ga(\rho)$, $L_\Ga(\rho)$ and $E_\Ga(\rho)$.
\end{dfn}

Schematic Figure \ref{fig:car} illustrates the definition of a carrot in the case when $z$ is critical.

\begin{figure}
  \centering
  \includegraphics[width=10cm]{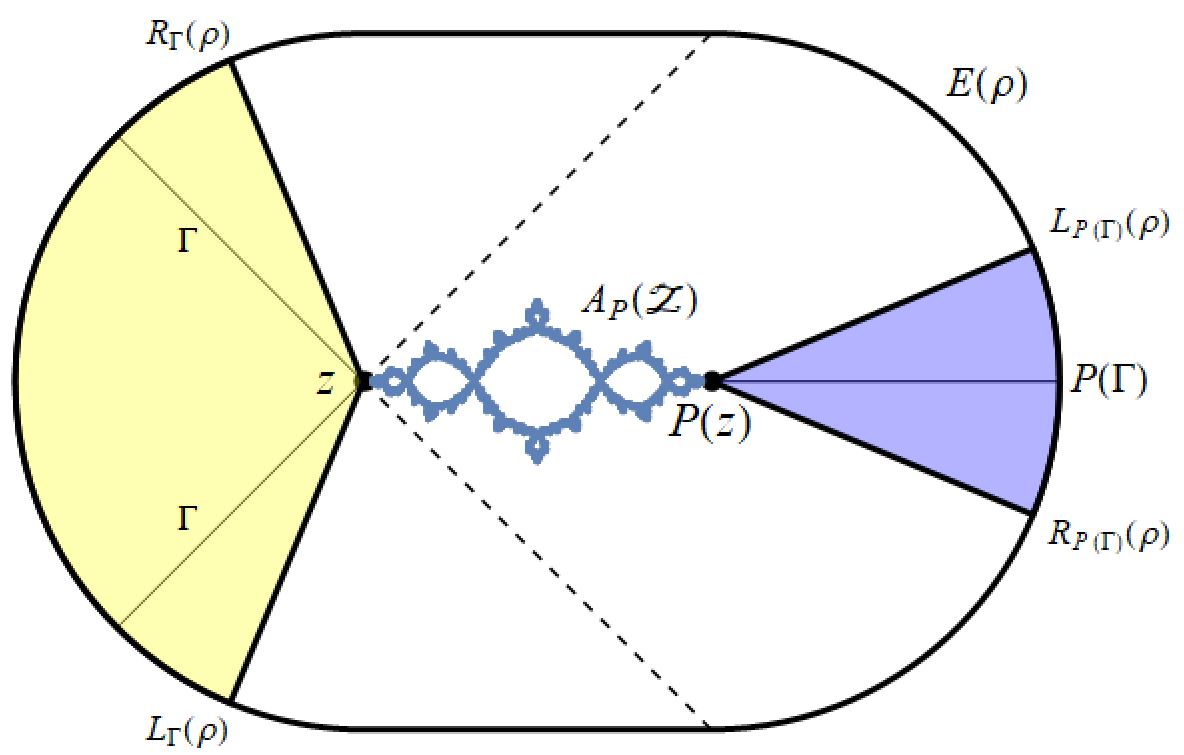}
  \caption{Definition of a carrot $C_\Ga(\rho)$, where $z=z_\Ga$ is critical.
  The carrot $C_{P(\Ga)}(\rho)$ is the (dark) shaded sector on the right and $C_\Ga(\rho)$ is the (light) shaded sector on the left.
  Here $P(\Ga)$ is degenerate, the local degree of $P$ at $z$ is 4, and the set $P^{-1}(P(\Ga))\sm\Ga$ consists of two dashed lines.
  }\label{fig:car}
\end{figure}

\subsection{Carrots are quasi-disks}
\label{ss:qd}
We will need the following geometric property of carrots.

\begin{prop}
\label{p:qd}
Let $\Ga\in \Zc$ have periodic repelling root point $z_\Ga$.
Then $C_\Ga(\rho)$ is a quasi-disk, for every $\rho$ sufficiently close to $1$.
\end{prop}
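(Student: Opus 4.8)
The plan is to verify that $C_\Ga(\rho)$ has a quasi-circle boundary by analyzing each of the three boundary arcs $R_\Ga(\rho)$, $L_\Ga(\rho)$, $E_\Ga(\rho)$ and the behaviour at the three ``corners'' $z_\Ga$, $R_\Ga(\rho)\cap E(\rho)$, $L_\Ga(\rho)\cap E(\rho)$. The arc $E_\Ga(\rho)$ is a piece of an equipotential, hence real-analytic; away from its endpoints it is locally a quasi-arc. The two corners on the equipotential are transverse crossings between a radial-type arc and the equipotential, and since $\psi_P$ is conformal on $\disk$, these corners are images under a conformal (hence locally quasi-conformal, by Theorem~\ref{t:qs-qc}) map of honest transverse crossings of logarithmic spirals with a circle in $\disk$; so they contribute no obstruction. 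The only delicate corner is the root point $z_\Ga$, where $R_\Ga(\rho)$ and $L_\Ga(\rho)$ meet.

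First I would handle the model case at the root point using the machinery of Subsection~\ref{ss:trans}. Since $z_\Ga$ is a \emph{periodic repelling} point, after passing to the appropriate iterate $P^n$ (the period of $\Ga$) we may work in a local coordinate $y$ near $z_\Ga$ in which $P^n$ is linear, $y\mapsto\lambda y$ with $|\lambda|>1$; if $z_\Ga$ is not critical on the orbit, then the branch of $R_\Ga(\rho)$ near $z_\Ga$ is mapped to a longer initial segment of the image arc, so $R'=\lambda R'\supset\dots$, and likewise for $L'$. By construction $R_\Ga(\rho)$ and $L_\Ga(\rho)$ near $z_\Ga$ are the two symmetric spiral-boundary curves of a proto-carrot pushed through $\psi_P$ (for the periodic degenerate case; for the preperiodic case they are pullbacks), and in the linearizing coordinate they become arcs that are smooth except at $0$ and satisfy $\lambda R'\supset R'$, $\lambda L'\supset L'$. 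Theorem~\ref{t:qa} then applies directly and yields that $R_\Ga(\rho)\cup L_\Ga(\rho)$, near $z_\Ga$, is a quasi-arc.

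Next I would assemble the global statement. The boundary $\bd C_\Ga(\rho)=R_\Ga(\rho)\cup L_\Ga(\rho)\cup E_\Ga(\rho)$ is a Jordan curve which, near each of its three corners, is locally a quasi-arc (root point by Theorem~\ref{t:qa}; the two equipotential corners by the conformal/transversality remark above), and along the interiors of the three arcs it is smooth hence locally a quasi-arc. Since the quasi-arc property is local and a Jordan curve covered by finitely many quasi-arc subarcs with uniform constant is a quasi-circle, $\bd C_\Ga(\rho)$ is a quasi-circle, so $C_\Ga(\rho)$ is a quasi-disk; by Theorem~\ref{t:ahl} this is exactly the conclusion. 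For the uniformity in $\rho$ near $1$, I would note that all the local pictures (the linearized root-point picture, the proto-carrot spirals, the equipotential) depend continuously on $\rho$ and degenerate nowhere as $\rho\to1$, so the quasi-arc constants stay uniformly bounded; Proposition~\ref{p:sid-td} guarantees the self-consistency of the carrot family under $z\mapsto z^d$ so the induction defining preperiodic carrots does not deteriorate the constants.

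The main obstacle I expect is the root point $z_\Ga$ when it is a \emph{critical} point lying on the periodic orbit, or when $\Ga$ is strictly preperiodic and its forward orbit passes through critical points before hitting the repelling periodic cut: there the linearizing coordinate is not available and the arcs $R_\Ga(\rho)$, $L_\Ga(\rho)$ are defined by pulling back through branches of $z\mapsto z^k$, $k>1$. This is precisely the situation handled by Proposition~\ref{p:invt} and Lemma~\ref{l:RL-qs}: the images $R'$, $L'$ under the local $k$-th power map land in the linear model, are transverse by Proposition~\ref{p:invt} (applied along the orbit), hence by Lemma~\ref{l:RL-qs} the $k$-th power map restricted to $R_\Ga(\rho)\cup L_\Ga(\rho)$ is QS, and by the observation following Lemma~\ref{l:RL-qs} (QS preimages of quasi-arcs are quasi-arcs) the arc $R_\Ga(\rho)\cup L_\Ga(\rho)$ is itself a quasi-arc near $z_\Ga$. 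Carrying this through the finite orbit of $\Ga$ by induction, composing finitely many such QS maps, gives the quasi-arc property at every corner, and the argument closes as above.
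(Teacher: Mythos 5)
Your core argument — linearize $P^m$ near the repelling periodic point $z_\Ga$ via the K\"onigs theorem, observe that the images $R'$, $L'$ of $R_\Ga(\rho)$, $L_\Ga(\rho)$ in the linearizing coordinate are smooth away from the origin and satisfy the invariance hypotheses $\lambda R'\supset R'$, $\lambda L'\supset L'$, and apply Theorem~\ref{t:qa} — is exactly the paper's proof of this Proposition. Your final paragraph about a critical root point on the periodic orbit is vacuous here (a repelling periodic point has multiplier of modulus $>1$, hence is never critical), and the strictly preperiodic case lies outside the scope of this Proposition: it is handled separately in Lemma~\ref{l:cqd}, via Proposition~\ref{p:invt} and Lemma~\ref{l:RL-qs}, precisely as you anticipate.
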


\begin{proof}
By our construction, it is enough to prove that $R_\Ga(\rho)\cup L_\Ga(\rho)$ is a quasi-arc locally near $z_\Ga$.
Observe that this property is independent of $\rho$.
Consider a local holomorphic coordinate $u$ near $z_\Ga$ such that $u=0$ at $z_\Ga$,
 and $P^m$ takes the form $u\mapsto\lambda u$.
Here $\lambda$ is the derivative of $P^m$ at $z_\Ga$, hence $|\lambda|>1$.
A local coordinate $u$ with the properties stated above exists by the classical K\"onigs linearization theorem.
Set $R'$, $L'$ to be the images of $R_\Ga(\rho)$, $L_\Ga(\rho)$ in the $u$-plane and apply Theorem \ref{t:qa} to $R'$, $L'$.
Since a holomorphic local coordinate change takes quasi-arcs to quasi-arcs, we obtain the desired.
\end{proof}

\begin{lem}
  \label{l:cqd}
All carrots $C_\Ga(\rho)$ with $\Ga\in \Zc$ and $\rho\in (0,1)$ sufficiently close to $1$ are quasi-disks.
Moreover, the map
$
P:R_\Ga(\rho)\cup L_\Ga(\rho)\to R_{P(\Ga)}(\rho^d)\cup L_{P(\Ga)}(\rho^d)
$
is quasi-symmetric.
\end{lem}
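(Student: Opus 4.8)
The strategy is to induct on the number of iterates needed to reach a periodic cut, handling the base case (periodic cuts) with Proposition \ref{p:qd} and the inductive step (strictly preperiodic cuts) with the transversality machinery of Section \ref{ss:trans}, in particular Lemma \ref{l:RL-qs} and the ``quasi-arc pullback'' observation following it. Throughout, ``quasi-disk'' reduces, by the construction of carrots as triangular regions bounded by two arcs landing at $z_\Ga$ together with an equipotential arc, to showing that $R_\Ga(\rho)\cup L_\Ga(\rho)$ is a quasi-arc; the equipotential side is smooth and causes no trouble, and the property is local near $z_\Ga$ and independent of $\rho$ once $\rho$ is close enough to $1$.

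First I would dispose of the base case: if $\Ga$ has periodic (hence degenerate) repelling root point, then Proposition \ref{p:qd} already gives that $C_\Ga(\rho)$ is a quasi-disk. For the QS statement in that case, note $P^m$ is locally linear in the K\"onigs coordinate, but more to the point, $P$ maps $R_\Ga(\rho)\cup L_\Ga(\rho)$ to $R_{P(\Ga)}(\rho^d)\cup L_{P(\Ga)}(\rho^d)$; along the orbit of a periodic degenerate cut, at each step either $z$ is a regular point of $P$ (so $P$ is a local biholomorphism, hence locally bi-Lipschitz, hence QS) or $z$ is critical of local degree $k>1$, in which case $P$ looks like $x\mapsto x^k$ in suitable coordinates and Lemma \ref{l:RL-qs} applies provided the two image arcs are transverse at $0$ — which holds by the ``typical application'' of Proposition \ref{p:invt} discussed in Section \ref{ss:trans}, since the image cut has an invariant ray landing at a repelling periodic point with $|\lambda|>1$ and $\lambda R'\supset R'$, $\lambda L'\supset L'$.

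Now the inductive step. Suppose $\Ga$ is strictly preperiodic and, by induction, $C_{P(\Ga)}(\rho^d)$ is a quasi-disk, so $R_{P(\Ga)}(\rho^d)\cup L_{P(\Ga)}(\rho^d)$ is a quasi-arc near $z_{P(\Ga)}$. I want to conclude the same for $R_\Ga(\rho)\cup L_\Ga(\rho)$ near $z_\Ga$ and that $P$ is QS between them. If $z_\Ga$ is not critical, $P$ is a local biholomorphism near $z_\Ga$, hence bi-Lipschitz, hence QS, and pulls the quasi-arc back to a quasi-arc — done. If $z_\Ga$ is critical of local degree $k>1$, then in local holomorphic coordinates $P$ is $x\mapsto x^k$, and $R_\Ga(\rho)$, $L_\Ga(\rho)$ are two of the $k$ pullbacks of (arcs of) $R_{P(\Ga)}(\rho^d)$, $L_{P(\Ga)}(\rho^d)$ emanating from $x=0$. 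Here the arcs $R:=R_\Ga(\rho)$, $L:=L_\Ga(\rho)$ and their images $R':=R_{P(\Ga)}(\rho^d)$, $L':=L_{P(\Ga)}(\rho^d)$ are exactly in the situation of Lemma \ref{l:RL-qs}, provided I verify its hypotheses: (i) the images under $x\mapsto x^k$ are simple arcs — true, they are (sub)arcs of the external rays of $P(\Ga)$; (ii) $0$ is an endpoint and $\lambda R'\supset R'$, $\lambda L'\supset L'$ for some $|\lambda|>1$ — this holds after replacing the cut $P(\Ga)$ by its appropriate forward iterate landing at a repelling periodic point and transporting back via the local biholomorphisms at the regular points of the orbit (the relevant $\lambda$ being a power of the multiplier); (iii) $R'\cap L'=\{0\}$ and $R'$, $L'$ are transverse — again supplied by Proposition \ref{p:invt} / the ``typical application'' remark, since $R'$, $L'$ are pullbacks of a ray landing at a repelling periodic point. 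Lemma \ref{l:RL-qs} then gives that $x\mapsto x^k$ restricted to $R\cup L$ is QS, hence (transporting back through the holomorphic coordinate changes, which preserve QS and quasi-arcs) $P:R_\Ga(\rho)\cup L_\Ga(\rho)\to R_{P(\Ga)}(\rho^d)\cup L_{P(\Ga)}(\rho^d)$ is QS; and by the remark after Lemma \ref{l:RL-qs}, since $R'\cup L'$ is a quasi-arc, so is $R\cup L$, so $C_\Ga(\rho)$ is a quasi-disk.

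The main obstacle I expect is the bookkeeping in the inductive step to produce, for a strictly preperiodic critical point $z_\Ga$, the genuinely expanding data ($\lambda$ with $|\lambda|>1$ and $\lambda R'\supset R'$) required to invoke Propositions \ref{p:invt} and Lemma \ref{l:RL-qs}: one must iterate forward along the (finite) orbit of the cut until it becomes the degenerate periodic cut with repelling root point, observe that along the way every non-periodic root point encountered is either regular — where $P$ is a local biholomorphism and everything transfers harmlessly — or critical — where one composes with another $x\mapsto x^k$ and reapplies Lemma \ref{l:RL-qs}, and finally pull the K\"onigs-linearized expanding structure at the periodic endpoint all the way back. Care is also needed that ``$\rho$ sufficiently close to $1$'' can be chosen uniformly over the finitely many cuts in $\Zc$ (it can, since $\Zc$ is finite and the quasi-arc constant is $\rho$-independent once $\rho$ is close to $1$), and that the arcs $R_\Ga(\rho)$ really do land at $z_\Ga$ — which is part of Definition \ref{d:carrot} and follows from Theorem \ref{t:stolz}. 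None of these are deep, but the clean statement of the induction — ``$P$ between the two arc-unions is QS and the source is a quasi-arc'' — is what makes the whole thing go through.
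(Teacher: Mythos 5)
Your plan reproduces the paper's proof: the same induction on distance to a periodic cut, the same base case from Proposition \ref{p:qd}, the same non-critical pullback step, and the same use of Lemma \ref{l:RL-qs} in K\"onigs coordinates at the critical step. Two small notes are in order. First, the paper handles the critical step more economically than your one-$P$-step setup with ``transport of the expanding structure back'': it takes $n$ to be the preperiod of the critical root point $z$, chooses a local coordinate $x$ near $z$ and the K\"onigs coordinate $y$ near $P^n(z)$ so that $P^n$ is $y=x^k$ and the return $P^m$ is $y\mapsto\lambda y$, and applies Lemma \ref{l:RL-qs} once; since by the inductive hypothesis $P^{n-1}$ is already QS on the boundary arcs of $C_{P(\Gamma)}(\rho^d)$, proving that $P^n$ is QS yields that $P$ itself is QS, and the quasi-arc property of $R_\Gamma(\rho)\cup L_\Gamma(\rho)$ then follows from the remark after Lemma \ref{l:RL-qs}. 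This absorbs any intermediate critical root points into the exponent $k$ and sidesteps the bookkeeping you flag. Second, you cite Proposition \ref{p:invt} as the source of the transversality and disjointness of the image arcs $R'$, $L'$, but that proposition concerns transversality of the pullbacks $R$, $\zeta R$ at the branch point and is already used internally in the proof of Lemma \ref{l:RL-qs}, not as one of its hypotheses; what you actually need as input is that $R'\cup L'$ is a quasi-arc --- supplied by Proposition \ref{p:qd} in the base case and by the inductive hypothesis otherwise --- from which transversality follows via Lemma \ref{l:qa-trans}. Your conclusion is unaffected, but the ascription is misplaced.
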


\begin{proof}
  Take $\Ga\in \Zc$ with root point $z=z_\Ga$.
If $z$ is periodic, then it is repelling,
and the map $P:\bd(C_\Ga(\rho))\to \bd(C_{P(\Ga)}(\rho^d))$ is quasi-symmetric
 (note that $P$ is conformal on a neighborhood of the boundary of $C_\Ga(\rho)$).
In this case, $C_\Ga(\rho)$ is a quasi-disk by Proposition \ref{p:qd}.
Suppose now that $z$ is strictly preperiodic.
We may assume by induction that $C_{P(\Ga)}(\rho^d)$ is a quasi-disk.
If $z$ is not critical, then it follows immediately that $C_\Ga(\rho)$ is also a quasi-disk,
 as a pullback of $C_{P(\Ga)}(\rho^d)$ under a map that is one-to-one and conformal in a neighborhood of $\bd(C_\Ga(\rho))$.
It also follows that the map $P:\bd(C_\Ga(\rho))\to \bd(C_{P(\Ga)}(\rho^d))$ is quasi-symmetric.

Finally, assume that $z$ is critical. Let $n$ be the smallest positive integer with $P^n(z)$ periodic,
and let $m$ be the minimal period of $P^n(z)$.
It suffices to prove that
$$
P^n:R_\Ga(\rho)\cup L_\Ga(\rho)\to R_{P^n(\Ga)}(\rho^{d^n})\cup L_{P^n(\Ga)}(\rho^{d^n})
$$
is QS.
Indeed, we may choose local coordinates $x$ near $z$ and $y$ near $P^n(z)$ so that the map $P^n$ takes the form $y=x^k$
 for some integer $k>1$ (this integer is the local degree of $P^n$ at $z$).
Moreover, the $y$ coordinate can be chosen so that $P^m$ takes the form $y\mapsto \la y$.
In these coordinates, Lemma \ref{l:RL-qs} applies and yields the desired.
\end{proof}

\subsection{The carrot modification of $P$}
Choose $\rho\in (0,1)$ close to $1$ so that the carrots $C_\Ga(\rho)$
for degenerate cuts $\Ga\in\Zc$ are disjoint except, possibly, for root points.
Recall that $U(\rho)$ is the bounded component of $\C\sm E(\rho)$.
Then $K_P\subset U(\rho)$.
In this section, we modify $P$ to form a new map $P^c:\ol U(\rho)\to \ol U(\rho^d)$.
First, let us define $P^c$ so that
$P^c=P$ outside of $\bigcup_{\Ga\in\Zc_{cr}} C_\Ga(\rho)$, where $\Zc_{cr}$ is the set of all \emph{critical} cuts in $\Zc$,
i.e., cuts with critical root points.

Suppose now that $\Ga\in\Zc_{cr}$.
Set $P^c=P$ on $R_\Ga(\rho)\cup L_\Ga(\rho)$.
Note that $E_\Ga(\rho)$ wraps around the entire $E(\rho^d)$ under $P$.
Define $P^c$ on $E_\Ga(\rho)$ as a QS isomorphism between $E_\Ga(\rho)$ and $E_{P(\Ga)}(\rho^d)$.
Thus, by the remark made above, $P^c$ is necessarily different from $P$ on $E_\Ga(\rho)$.
Finally, let $P^c:C_\Ga(\rho)\to C_{P(\Ga)}(\rho^d)$ be a QS map that extends the already defined map
 $P^c:\bd(C_\Ga(\rho))\to \bd(C_{P(\Ga)}(\rho^d))$.
The existence of such extension is guaranteed by Theorem \ref{t:qs-ext}.
The map $P^c:\ol U(\rho)\to\ol U(\rho^d)$ is a \emph{carrot modification} of $P$.
Clearly, $P^c:U(\rho)\to U(\rho^d)$ is a proper map; let $d_c$ be its topological degree.
Observe that $d_c<d$ provided that $\Zc_{cr}\ne\0$.

Recall that quasi-regular polynomials were introduced in Definition \ref{d:qr-poly}.

\begin{lem}
  \label{l:f}
  There is a quasi-regular degree $d_c$ polynomial
  $f:\C\to\C$ such that $f=P^c$ on $\ol U(\rho)$ and
 $P^c=P$ outside of $\bigcup_{\Ga\in\Zc_{cr}} C_\Ga(\rho)$.
In particular, $f$ is holomorphic on a neighborhood of infinity.
\end{lem}

\begin{proof}
The map $P^c: U(\rho)\to U(\rho^d)$ is obtained by gluing together finitely many quasi-regular maps along quasi-arcs.
Such map is itself quasi-regular, as follows from the ``QC removability'' of quasi-arcs, cf. Proposition 4.9.9 of \cite{hub}.

  Set $\disk(\rho)=\{z\in\disk\mid |z|<\rho\}$; this is the disk of radius $\rho$ around $0$.
Clearly, there is a quasi-regular map $g:\ol\disk(\rho)\to\ol\disk(\rho^d)$ such that $g(z)=z^{d_c}$ in a neighborhood of $0$,
and $g=\psi_P^{-1}\circ P^c\circ\psi_P$ on the boundary of $\disk(\rho)$.
It suffices to define $f$ as $P^c$ on $\ol U(\rho)$ and as $\psi_P\circ g\circ\psi_P^{-1}$ on $\C\sm\ol U(\rho)$.
\end{proof}

Define a positive integer $T_0$ so that $\ol \d f=0$ on the unbounded complementary component of $E(\rho^{d^{T_0}})$.
Let $A_0$ be a topological annulus such that the bounded complementary component of $A_0$ lies in $\ol U(\rho)$,
 and $\ol\d f=0$ in the unbounded complementary component.
We may assume that $A_0$ is bounded by $E(\rho)$ from the inner side and by $E(\rho^{d^{T_0}})$ from the outer side.
Set $A=A_0\cup A_{cr}$, where $A_{cr}=\bigcup_{\Ga\in \Zc_{cr}} C_\Ga(\rho)$.
Then by definition $\ol\d f=0$ outside of $A$.
In order to verify the assumptions of Theorem \ref{t:sw}, it remains to prove the following lemma.

\begin{lem}
  \label{l:T}
  Define $T_{cr}$ as the cardinality of $\Zc_{cr}$; set $T=T_{cr}+T_0$.
  The forward $f$-orbit of any point $x\in\C$ can visit $A_{cr}$ at most $T_{cr}$ times.
Therefore, it can visit $A$ at most $T$ times.
\end{lem}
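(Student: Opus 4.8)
The plan is to prove the key claim — that the forward $f$-orbit of any $x\in\C$ visits $A_{cr}$ at most $T_{cr}$ times — by tracking how a point can enter a critical carrot, and then deduce the statement about $A$ by a routine separate count of visits to $A_0$. First I would recall the structure of $\Zc_{cr}$: it is a finite set of critical cuts, and by legality every cut in $\Zc$ lies on a finite forward orbit terminating in a degenerate cut with repelling periodic root point. The essential point is that $\Zc_{cr}$ contains \emph{no periodic cut}: a periodic cut in a legal family is degenerate, hence has no critical root point (a critical point in $\rt_\Zc$ is, by assumption, strictly preperiodic to a repelling cycle, and a periodic cut has a periodic root point). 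So each $\Ga\in\Zc_{cr}$ is strictly preperiodic, and under forward iteration of $P$ its image eventually leaves the collection of critical cuts forever.

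The core argument is then a combinatorial trapping statement: once the $f$-orbit of a point $x$ is inside a critical carrot $C_\Ga(\rho)$ for $\Ga\in\Zc_{cr}$, the image $P^c(C_\Ga(\rho))=C_{P(\Ga)}(\rho^d)$ is a carrot attached to the cut $P(\Ga)\in\Zc$; by induction along the finite forward orbit of $\Ga$, the orbit of $x$ is confined to the nested chain of carrots $C_\Ga, C_{P(\Ga)}, C_{P^2(\Ga)},\dots$ until it reaches a carrot $C_{P^j(\Ga)}$ whose cut is no longer critical. From that moment on $P^c$ agrees with $P$ on all remaining carrots in the chain (since $P^c=P$ outside $\bigcup_{\Ga\in\Zc_{cr}}C_\Ga(\rho)$), and crucially, because the terminal cut of the chain is degenerate with repelling root point, the carrots $C_{P^i(\Ga)}(\rho^{d^i})$ for $i>j$ contain no further critical cuts' root points, so no further visits to $A_{cr}$ occur. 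Hence the orbit of $x$ enters $A_{cr}$ at most once per element of $\Zc_{cr}$ lying on the forward orbit of $\Ga$, and since these orbits are finite and distinct critical cuts contribute disjointly, the total count is at most $|\Zc_{cr}|=T_{cr}$.

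The second assertion is then immediate: by construction of $A_0$ (bounded by $E(\rho)$ and $E(\rho^{d^{T_0}})$), and since $\ol\d f=0$ in the unbounded complementary component of $A_0$, a point whose orbit has left $\ol U(\rho)$ never returns (as $f$ is a genuine polynomial of degree $d_c$ there, and $|f(z)|$ grows), so the orbit passes through the annular region $A_0$ monotonically outward and can meet it at most $T_0$ times; combined with the bound for $A_{cr}$ and the fact that $A=A_0\cup A_{cr}$, any orbit visits $A$ at most $T_{cr}+T_0=T$ times.

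\textbf{Main obstacle.} The delicate step is making rigorous the claim that an orbit entering a critical carrot stays trapped in the nested chain of carrots, and in particular that it cannot \emph{re-enter} $A_{cr}$ through some \emph{other} critical carrot later. This requires knowing that the carrots $C_\Ga(\rho)$, $\Ga\in\Zc$, are pairwise disjoint (stated as holding for $\rho$ close to $1$) and that $P^c$ maps $C_\Ga(\rho)$ onto $C_{P(\Ga)}(\rho^d)$ \emph{surjectively and properly} with no part of the image escaping into a sibling carrot — i.e. that the carrot combinatorics is genuinely forward-invariant along the orbit of each cut. One must also handle the bookkeeping when the forward orbit of one critical cut meets the forward orbit of another, ensuring the visit counts add rather than multiply; here the right way to phrase it is that each \emph{visit} to $A_{cr}$ is charged to a distinct element of $\Zc_{cr}$ (the cut whose carrot is being visited at that moment), and an orbit can be inside $C_\Ga(\rho)$ for at most one maximal time interval since, once it leaves, the image carrot $C_{P(\Ga)}(\rho^d)$ is a \emph{different} carrot and the dynamics never returns a point to $C_\Ga(\rho)$ (carrots shrink under forward iteration toward their root points and the map is expanding transverse to them). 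Pinning down this non-recurrence cleanly is the heart of the lemma.
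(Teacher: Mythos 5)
Your plan captures the correct dynamical picture: once the $f$-orbit of $x$ enters a critical carrot it is carried along the chain of image-carrots, each critical carrot on that chain is met at most once, and the chain eventually consists only of periodic (hence non-critical) cuts, so at most $T_{cr}$ visits to $A_{cr}$ are possible; the count for $A_0$ is routine and you dispatch it correctly. You also correctly isolate the crux, the non-recurrence, as the heart of the lemma. The difficulty is that your proposal stops at naming that obstacle without actually closing it, and the heuristic you offer in its place is wrong: you write that ``carrots shrink under forward iteration toward their root points,'' but the carrots in the chain are $C_{P^j(\Ga)}(\rho^{d^j})$ and these \emph{grow} as $j$ increases ($\rho^{d^j}$ decreases, which enlarges the carrot). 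The correct observation is the opposite one: since $C_{\Ga'}(\rho')\cap\ol U(\rho)=C_{\Ga'}(\rho)$ for $\rho'<\rho$, the orbit, as long as it remains in $\ol U(\rho)$, lies in the \emph{fixed-parameter} carrot $C_{\Ga_j}(\rho)$ at time $j$, and these carrots are pairwise disjoint. Also, your criterion ``the carrots $\dots$ contain no further critical cuts' root points'' is not the right one --- what must be shown is that those carrots miss the full sets $C_{\Ga'}(\rho)$ with $\Ga'\in\Zc_{cr}$, not merely their root points.

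The paper closes the non-recurrence gap differently, with a device your proposal does not contain. One defines $\mathsf{X}\subset\disk$ to be the set of $(\rho,\ta)$ with $\rho\le e^{-|\ta-\ta_\Ga|}$ for some \emph{periodic} $\Ga\in\Zc$ (no inner radius cutoff). Then $\mathsf{X}$ is forward invariant under $z\mapsto z^d$, $\psi_P(\mathsf{X})$ contains $C_\Ga(\rho')$ for every periodic $\Ga$ and every $\rho'$, and $f$ agrees with $P$ on $\psi_P(\mathsf{X})\cap\ol U(\rho)$ provided $\psi_P(\mathsf{X})$ meets no critical carrot. The chain argument then shows the orbit visits $A_{cr}$ at most $T_{cr}$ times before first entering $\psi_P(\mathsf{X})$, and it remains to prove that $C_\Ga(\rho)\cap\psi_P(\mathsf{X})=\0$ for every $\Ga\in\Zc_{cr}$ when $\rho$ is close to $1$. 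For that the paper observes that the arc $I_\Ga\subset\R/\Z$ of angles whose rays are separated from $A_P(\Zc)$ by a critical cut $\Ga$ has endpoints that are strictly preperiodic and cannot contain any periodic angle $\ta_{\Ga'}$ (periodic cuts lie in $A_P(\Zc)$, on the other side of $\Ga$); since $C_\Ga(\rho)\sm K_P\subset\psi_P(\c(\rho,I_\Ga))$ and $\c(\rho,I_\Ga)$ separates from $\mathsf{X}$ as $\rho\to1$, disjointness follows. This angle-arc argument, or an equivalent statement pinning the orbit to the disjoint family $\{C_\Ga(\rho)\}_{\Ga\in\Zc}$ while it stays in $\ol U(\rho)$, is the missing step in your write-up.
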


\begin{proof}
It suffices to prove the first statement.
Define the subset $\mathsf{X}\subset\disk$ consisting of all points, whose polar coordinates $(\rho,\ta)$ satisfy
$$
\rho\le e^{-|\ta-\ta_\Ga|}
$$
for at least one periodic $\Ga\in\Zc$ (then $\Ga$ is necessarily degenerate by our assumption on $\Zc$).
Clearly, $\mathsf{X}$ is forward invariant under the map $(\rho,\ta)\mapsto (\rho^d,d\ta)$.
Moreover, $\psi_P(\mathsf{X})$ includes all $C_\Ga(\rho')$ for all periodic $\Ga\in\Zc$ and all $\rho'\in (0,1)$.

Suppose now that $\rho_0\in (0,1)$ is sufficiently close to $1$.
A forward $P$-orbit of a point $x\in A_{cr}$ may visit $A_{cr}$ at most $T_{cr}$ times before it first enters $\psi_P(\mathsf{X})$
(that is, it may visit each $C_\Ga(\rho_0)$ with $\Ga\in\Zc_{cr}$ at most once).
Thus it suffices to prove that no point of $\psi_P(\mathsf{X})$ can map to $C_\Ga(\rho_0)$ with $\Ga\in\Zc_{cr}$
 under an iterate of $f$.
Since $\psi_P(\mathsf{X})$ is forward invariant, it suffices to choose $\rho_0$ so that
 $C_\Ga(\rho_0)\cap\psi_P(\mathsf{X})=\0$ for all $\Ga\in\Zc_{cr}$.

Take any $\Ga\in\Zc_{cr}$.
The set of all angles $\ta$ such that $R_P(\ta)$ is separated from $A_P(\Zc)$ by $\Ga$ is an arc $I_\Ga$ of $\R/\Z$
 whose length is an integer multiple of $1/d$
 (indeed, the endpoints of this arc are mapped to the same point under the $d$-tupling map).
Define
$$
\c(\rho_0,I_\Ga)=\{(\rho,\ta)\mid \exists\ta_0\in I_\Ga\quad \rho_0\le\rho\le e^{-|\ta-\ta_0|}\}.
$$
Then all points in $C_\Ga(\rho_0)\sm K_P$ are necessarily in $\psi_P(\c(\rho_0,I_\Ga))$.
It is clear that no $\ta_\Ga$ with periodic $\Ga$ can belong to $I_\Ga$.
Therefore, $\c(\rho_0,I_\Ga)$ is disjoint from $\mathsf{X}$ for $\rho_0$ sufficiently close to $1$.
It follows that $C_\Ga(\rho_0)\cap\psi_P(\mathsf{X})=\0$, as desired.
\end{proof}

The set $A_P(\Zc)$ is a fully invariant set for $f$.
We now assume that the map $P:A_P(\Zc)\to A_P(\Zc)$ is not injective, hence $d_c\ge 2$.
Thus all assumptions of Theorem \ref{t:sw} are fulfilled.
Then there is a QC map $\Psi:\ol\C\to\ol\C$ and a rational map $Q:\C\to\C$ of degree $d_f$ such that $Q\circ\Psi=\Psi\circ f$.
It can be arranged that $\Psi(\infty)=\infty$.
With this normalization, $Q^{-1}(\infty)=\{\infty\}$, therefore, $Q$ is a degree $d_c$ polynomial.

\begin{thm}
  \label{t:P^c-APL}
  The set $\Psi(A_P(\Zc))$ coincides with $K_Q$.
\end{thm}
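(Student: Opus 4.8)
The plan is to show the two inclusions $\Psi(A_P(\Zc))\subseteq K_Q$ and $K_Q\subseteq\Psi(A_P(\Zc))$ by characterizing both sets dynamically through the conjugacy $Q\circ\Psi=\Psi\circ f$. First I would record the elementary fact that $A_P(\Zc)$ is a compact, $f$-fully-invariant set on which $f=P$ (since the carrot modification changes $P$ only inside $A_{cr}=\bigcup_{\Ga\in\Zc_{cr}}C_\Ga(\rho)$, and by Lemma~\ref{l:T} — more precisely, by the disjointness $C_\Ga(\rho_0)\cap\psi_P(\mathsf X)=\0$ and the definition of $A_P(\Zc)$ as points whose forward orbits avoid $\bigcup\Wc_\Zc$ — the set $A_P(\Zc)$ stays out of the modified carrots for all forward iterates). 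Consequently $\Psi(A_P(\Zc))$ is a compact $Q$-fully-invariant set, and it is connected because $A_P(\Zc)$ is assumed connected and $\Psi$ is a homeomorphism. A compact connected fully invariant set for a polynomial $Q$ that is disjoint from $\infty$ must lie in $K_Q$: any point with unbounded forward orbit escapes to $\infty$, so a bounded forward-invariant set is contained in $K_Q$. This gives $\Psi(A_P(\Zc))\subseteq K_Q$ for free, so the real content is the reverse inclusion.

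For $K_Q\subseteq\Psi(A_P(\Zc))$, the strategy is to pull back to the $f$-plane: it suffices to prove that $\Psi^{-1}(K_Q)\subseteq A_P(\Zc)$, i.e. that every point $x$ whose forward $f$-orbit is bounded (hence, conjugate to a point of $K_Q$) in fact never leaves the principal region — equivalently, $f^n(x)\notin\bigcup\Wc_\Zc$ for all $n\ge 0$. Here is where I expect the main obstacle. One direction of the dichotomy is plausible: if $x\in U(\rho)$ escapes to infinity under $f$ then $\Psi(x)\notin K_Q$; but the subtle point is the opposite — showing that $f$ genuinely \emph{repels} off $A_P(\Zc)$, so that a point landing in any wedge $W_\Ga$ (or in the interior of a carrot, or past the equipotential $E(\rho)$) is eventually pushed outside $U(\rho)$ and thence to $\infty$. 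This is precisely the ``$f$ repels points off $K_P$'' property advertised in the outline of Section~\ref{s:plan}. The idea is: outside $\ol U(\rho)$, $f$ is conjugate via $\psi_P$ to $z\mapsto$ (a quasiregular perturbation of) $z^{d_c}$ on the annulus, which is expanding in the radial direction, so orbits there march monotonically outward to $\infty$; and on the carrots $C_\Ga(\rho)$ and their preimage structure, the carrot was built (Lemma~\ref{l:cqd}, Proposition~\ref{p:qd}) so that $f$ maps $C_\Ga(\rho)$ onto $C_{P(\Ga)}(\rho^d)$, a \emph{strictly larger} carrot, forcing escape after finitely many steps — here the bound $T$ from Lemma~\ref{l:T} is exactly what guarantees no point loiters in $A$ forever, after which it is in the pure ``$z\mapsto z^{d_c}$'' regime and escapes. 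A point that is in $\bigcup\Wc_\Zc$ at some stage lies, after applying the equipotential truncation, inside some carrot region and is handled the same way.

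Thus the key steps, in order, are: (i) $A_P(\Zc)$ is compact, connected, $f$-fully-invariant, and avoids the modified carrots, so $f|_{A_P(\Zc)}=P|_{A_P(\Zc)}$ and $\Psi(A_P(\Zc))$ is compact, connected, $Q$-fully-invariant, whence $\Psi(A_P(\Zc))\subseteq K_Q$; (ii) conversely, take $x\in U(\rho)$ with $x\notin A_P(\Zc)$ and show $f^n(x)\to\infty$: if some $f^n(x)\notin U(\rho)$, escape follows from the radial expansion of $\psi_P\circ(z\mapsto z^{d_c}\text{-type map})\circ\psi_P^{-1}$ outside $\ol U(\rho)$; otherwise the orbit stays in $U(\rho)$ but, not lying in $A_P(\Zc)$, must enter some wedge, hence (after the equipotential cutoff) some carrot, and by the carrot-enlargement property together with the finiteness bound $T$ of Lemma~\ref{l:T} it is ejected from $U(\rho)$ in finitely many steps, reducing to the previous case; (iii) therefore $\Psi^{-1}(K_Q)\subseteq A_P(\Zc)$, giving $K_Q\subseteq\Psi(A_P(\Zc))$ and hence equality. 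The hard part is step (ii): making the ``$f$ repels off $K_P$'' expansion statement precise and uniform, especially the interaction between the finitely many visits to $A_{cr}$ allowed by Lemma~\ref{l:T} and the eventual monotone radial escape; once that is in hand the rest is bookkeeping.
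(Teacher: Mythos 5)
Your overall decomposition — forward inclusion by invariance, reverse inclusion by showing that points off $A_P(\Zc)$ escape to infinity under $f$ — is exactly the structure of the paper's proof. Step (i) is fine. The issue is that you identify step (ii) as ``the hard part'' and then gesture at ``carrot-enlargement plus Lemma~\ref{l:T}'' without actually supplying the escape mechanism, and the mechanism you point to is the wrong one. Lemma~\ref{l:T} is there to verify the hypotheses of Theorem~\ref{t:sw}; by itself, bounding the number of visits to $A$ does not force an orbit to leave $U(\rho)$, since an orbit could in principle stay in $K_P$ (and hence in $U(\rho)$) forever after its last visit to $A_{cr}$. Likewise, ``$f$ maps $C_\Ga(\rho)$ onto $C_{P(\Ga)}(\rho^d)$'' does not by itself imply eventual ejection from $U(\rho)$.

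The concrete fact that closes the gap — and that the paper uses — is the following: once the $f$-orbit enters some carrot $C_\Ga(\rho)$, one can push forward finitely many steps to land in a carrot $C_{\Ga'}(\rho^{d^l})$ with $\Ga'$ \emph{periodic} (using $f(C_\Ga(\rho))\subset C_{P(\Ga)}(\rho^d)$); a periodic carrot is by construction $\psi_P$ applied to a proto-carrot touching $\uc$ only at one point, so it meets $K_P$ only at the root point $z_{\Ga'}$, which lies in $A_P(\Zc)$. Since $x\notin A_P(\Zc)$ and $A_P(\Zc)$ is forward invariant, the orbit point cannot be $z_{\Ga'}$; hence it lies strictly in the $P$-basin of infinity. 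Periodic carrots are non-critical, so $f=P$ there, and the orbit escapes under $P$, reducing to your first case. It is this ``periodic carrot minus its root lies in the basin of infinity, and $f=P$ on it'' observation — not the visit-count bound — that makes the escape argument go through.
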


\begin{proof}
Since $A_P(\Zc)$ is $P$-forward invariant, $\Psi(A_P(\Zc))\subset K_Q$.
It remains to prove that any point $y=\Psi(x)$ with $x\notin A_P(\Zc)$ escapes to infinity under the iterations of $Q$.
Equivalently, $x$ escapes to infinity under the iterations of $f$.
Indeed, if the forward $f$-orbit of $x$ is outside of $K_P$ and outside of all carrots, then $f^n(x)=P^n(x)\to\infty$.
If $x$ is in $K_P$ but not in $A_P(\Zc)$, then $f^k(x)\in C_\Ga(\rho)$ for some $k\ge 0$ and some $\Ga\in\Zc$.
Possibly replacing $\rho$ with $\rho^{d^l}$ with a suitable $l$ and $x$ with $f^l(x)$, we may assume that $\Ga$ is periodic.
However, in this case $C_\Ga(\rho)$ is in the $P$-basin of infinity, hence $f^n(x)=P^{n-k}\circ f^k(x)\to\infty$.
\end{proof}

Since $\deg(Q)=\deg(f)=d_c<\deg(P)$, Theorem \ref{t:main-a} is proved in the case when $P|_{A_P(\Zc)}$ is not injective.

\subsection{The case when $P$ is injective on $A_P(\Zc)$}
Theorem \ref{t:1-1} completes the proof of the Theorem \ref{t:main-a}.

\begin{thm}
  \label{t:1-1}
  Suppose that all assumptions of the Main Theorem are fulfilled and $P:A_P(\Zc)\to A_P(\Zc)$ is one-to-one.
Then $A_P(\Zc)$ is a single repelling point.
\end{thm}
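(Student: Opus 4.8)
The plan is to show that if $P|_{A_P(\Zc)}$ is injective, then $A_P(\Zc)$ cannot contain any nondegenerate structure, and in fact reduces to a single repelling fixed or periodic point. The starting observation is that an injective branched covering $P:A_P(\Zc)\to A_P(\Zc)$ of degree $d_c$ forces $d_c=1$, which already rules out all the surgery machinery of the preceding sections; so a direct topological argument is needed instead. First I would recall that since $\Zc$ is legal (after the reduction of Section \ref{s:rep}), every periodic cut is degenerate with a repelling periodic root point, and all critical points in $\rt_\Zc$ are preperiodic to repelling periodic orbits. If $\Zc=\0$, then $A_P(\Zc)=K_P$ and injectivity of $P|_{K_P}$ forces $K_P$ to be a single point (a polynomial of degree $>1$ is never injective on a nondegenerate connected filled Julia set), and that point must be a repelling fixed point since an attracting or neutral fixed point would have a nondegenerate basin/filled Julia set. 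So assume $\Zc\ne\0$.

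The heart of the argument is to analyze $A_P(\Zc)$ assuming it is a nondegenerate continuum on which $P$ is injective. Since $P|_{A_P(\Zc)}$ is injective and onto, it is a homeomorphism of the continuum $A_P(\Zc)$; in particular every periodic point in $A_P(\Zc)$ has a well-defined rotation-type local behavior, and the continuum admits no critical points of $P$ in its interior dynamics. The key tension is that $A_P(\Zc)$ contains the root points $\rt_\Zc$, which by legality include repelling periodic points and are landing points of at least two external rays (the two rays of each nondegenerate cut together with their images). I would use the fact that at a repelling periodic point $z$ where $\ge 2$ external rays land, these rays together with $z$ separate $K_P$, and $A_P(\Zc)$ is, by construction, squeezed between the wedges $\Wc_\Zc$; this lets me control how rays landing at points of $A_P(\Zc)$ are permuted. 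Injectivity of $P|_{A_P(\Zc)}$ severely restricts the permutation: an injective map cannot "fold" a neighborhood of a point, so the cyclic order of accesses to $A_P(\Zc)$ at each point must be preserved, and no point of $A_P(\Zc)$ can be a local cutpoint in an orientation-reversing or degree-$\ge 2$ way. The plan is then to derive that $A_P(\Zc)$ has empty interior and no cutpoints, hence (being a continuum) is either an arc or a point; an arc on which a polynomial acts injectively and which is forward-invariant with an expanding periodic point forces a contradiction via the expansion at that repelling point — the arc would have to be mapped strictly inside a proper subarc near $z$ while being invariant, which is impossible for a nondegenerate arc. Therefore $A_P(\Zc)$ is a point, and as before that point is repelling.

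**The main obstacle** will be ruling out the "arc" case cleanly, and more generally showing $A_P(\Zc)$ has no interior: a priori an injective continuous self-map of a continuum can be fairly wild, and I cannot immediately invoke polynomial-like structure since $d_c=1$. The cleanest route is probably to use Theorem \ref{t:stolz} and the landing properties of rational rays to show that at the repelling periodic root point $z_\Ga\in\rt_\Zc$ the set $A_P(\Zc)$ is contained in a Stolz-angle complement of the wedges, deduce that $A_P(\Zc)$ accumulates at $z_\Ga$ only within a controlled sector, and then exploit the Königs linearization at $z_\Ga$ exactly as in Proposition \ref{p:qd}: in the linearizing coordinate $P^m$ is $u\mapsto\lambda u$ with $|\lambda|>1$, and a forward-invariant set through $0$ on which the map is injective and which contains points arbitrarily close to $0$ must contain a whole ray-like curve to $0$; chasing its image under $\lambda$ forces it to be one-dimensional and ultimately to collapse. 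I would present this as the crux and handle the $\Zc=\0$ base case and the "point is repelling" conclusion as quick remarks, citing the Douady--Hubbard--Sullivan landing theorem (Definition \ref{d:rayland}) for why the surviving point, being the landing point of a fixed rational ray, is repelling or parabolic, with parabolic excluded by the no-outward-parabolic hypothesis together with the fact that a parabolic point retains a nondegenerate attracting petal in $A_P(\Zc)$.
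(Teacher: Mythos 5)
Your approach diverges substantially from the paper's, and it has a gap that I do not think can be repaired along the lines you propose. The paper's proof, after passing to an iterate making all periodic cuts fixed, shows (as you partly observe) that injectivity forces $A_P(\Zc)\setminus\rt_\Zc$ to contain no critical points. The decisive input is then Theorem~7.4.7 of \cite{bfmot13}: a nondegenerate $P$-invariant plane continuum must contain a \emph{rotational} fixed point $a$. A non-repelling $a$ is excluded using Fatou's theorem (attracting and parabolic cases) and, for Cremer/Siegel points, Theorem~4.3 of \cite{bclos16} together with Ma\~n\'e's theorem, because each of these forces a critical point in $A_P(\Zc)$ that is not eventually periodic, contradicting the previous step and the legality of $\Zc$. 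A repelling rotational $a$ is then excluded because the only remaining candidate fixed point is $z_\Ga$ for the fixed cut $\Ga$, and since $\Ga$ is a degenerate fixed cut its root point has rotation number $0$, i.e.\ is non-rotational.

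The gap in your proposal is the topological reduction ``empty interior and no cutpoints $\Rightarrow$ arc or point.'' This is simply false: a Jordan curve, a pseudo-arc, and a great many other plane continua have empty interior and no cutpoints without being arcs. Moreover you do not actually establish either hypothesis. The avoiding set $A_P(\Zc)$ can a priori contain Fatou components (Siegel disks, for instance), so ``empty interior'' genuinely requires the Fatou/Ma\~n\'e machinery and does not follow from injectivity alone; and I see no reason injectivity of $P|_{A_P(\Zc)}$ should preclude cutpoints --- an injective self-map of a continuum permutes cutpoints quite happily. The linearization-based argument you sketch for the arc case is also unconvincing (a self-homeomorphism of an arc can have a repelling fixed endpoint, with the opposite endpoint attracting, and an invariant arc through a repelling complex eigenvalue need not collapse), but this is secondary to the fact that the arc case is not all you would need to consider. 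Finally, what is essentially missing from your argument is any substitute for the rotational-fixed-point theorem of \cite{bfmot13}; this is the structural theorem that produces the contradiction, and elementary topology of the avoiding set does not appear to provide an alternative route to it.
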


\begin{proof}
  Replace $P$ with a suitable iterate to arrange that all periodic cuts in $\Zc$ are fixed.
Let $\Ga$ be such a fixed cut, and $z_\Ga$ its root point.
Then $P(z_\Ga)=z_\Ga$.
We claim that there are no critical points of $P$ in $A_P(\Zc)\sm\rt_\Zc$.
Indeed, consider a critical point $c\in A_P(\Zc)$.
A point $w\in A_P(\Zc)$ near $P(c)$ has at least two preimages $z$, $z'$ near $c$.
If both are in $\C\sm\bigcup\Wc_\Zc$, then both are in $A_P(\Zc)$, a contradiction.
Thus, say, $z'$ is not in the principal set; then it must be separated from $A_P(\Zc)$ by a cut from $\Zc$.
Since $w$ can be chosen arbitrarily close to $P(c)$, the point $c$ itself must belong to $\rt_\Zc$.

Suppose that $A_P(\Zc)$ is not a singleton.
Then Theorem 7.4.7 of \cite{bfmot13} is applicable to the $P$-invariant continuum $A_P(\Zc)$.
This theorem states that there is a rotational fixed point in $A_P(\Zc)$.
That is, either a non-repelling fixed point $a$ or a repelling fixed point $a$ such that the external rays of $P$ landing at $a$
 undergo a nontrivial combinatorial rotation.
If $a$ is non-repelling, then there is a critical point $c$ that is not
preperiodic and not separated from $a$ by $\Zc$. In the attracting
and parabolic cases this follows from classical results of Fatou \cite{Fat20}.
Suppose that $a$ is a Cremer of Siegel fixed point. This case was considered in Theorem 4.3
\cite{bclos16} (the proof is based upon \cite{bm05} and classical results of Ma\~n\'e \cite{Man93})
that implies that then $A_P(\Zc)$ must contain a recurrent critical point.
By the previous paragraph this leads to a contradiction.
Thus $a$ is repelling and rotational. However,
since $P|_{A_P(\Zc)}$ is one-to-one, this implies that there are no
other fixed points in $A_P(\Zc)$. Therefore, $a=z_\Ga$; but the latter
is non-rotational, a contradiction. We conclude that $A_P(\Zc)$ is a
singleton.
\end{proof}

\section{Proof of the Main Theorem}
\label{s:fk-apl}
Consider a continuum $Y\subset K_P$ such that $P:Y\to Y$ is a degree $k$ branched covering.
By definition, there are open neighborhoods $U$ and $V$ of $Y$ and a degree $k$ branched covering $\tilde P:U\to V$
 such that $P=\tilde P$ on $Y$ and $\tilde{P}^{-1}(Y)=Y$.
For every $y\in Y$, the local multiplicity $\mu_Y(y)$ is defined as the multiplicity of $y$ with respect to $\tilde P$.
For all $z\ne P(y)$ very close to $P(y)$, exactly $\mu_Y(y)$ points of $P^{-1}(z)\cap Y$ are near $y$.
If $y$ is not critical then $\mu_Y(y)=1$.
On the other hand, some critical points of $P$ in $Y$ may also have multiplicity $1$ with respect to $Y$
 (that is, with respect to $\tilde P$).
Irregular points of $Y$ are precisely the points $y\in Y$ with $\mu_{K_P}(y)>\mu_Y(y)$.

The proof of the Main Theorem splits into several steps.

\subsection{Reduction to the case when $K_P$ is connected}
Let $P$ and $Y$ be as above.
Suppose first that $K_P$ is disconnected.
Let $K_P(Y)$ be the component of $K_P$ containing $Y$.
Clearly, $K_P(Y)$ is a $P$-invariant continuum.
Choose a tight equipotential $E_{V^*}$ around $K_P(Y)$ so that
the disk $V^*$ bounded by $E_{V^*}$ does not contain escaping critical points of $P$.
Then $P:U^*\to V^*$ is a PL map with filled Julia set $K_P(Y)$, where $U^*$ is the component of $P^{-1}(V^*)$ containing $Y$.
By Theorem \ref{t:DH-pl}, the PL map $P:U^*\to V^*$ is \emph{hybrid equivalent} to a PL restriction of a polynomial, say, $P^*$.
Let $Y^*$ be the subset of $K_{P^*}$ corresponding to $Y\subset K_P(Y)$.
Evidently, $P^*$ and $Y^*$ satisfy the assumptions of the Main Theorem.
Thus, we can consider only polynomials with connected Julia sets.

\subsection{Defining an admissible collection of cuts}
From now on, assume that the Julia set of $P$ is connected.
Start by defining a collection of cuts whose root points are irregular points of $Y$.
Let $a$ be an irregular point; it is necessarily a critical point of $P$.
By the assumptions of the Main Theorem, the point $a$ is eventually mapped to a repelling periodic point.
It follows from the Landing Theorem that there are (pre)periodic external rays landing at $a$.
Recall that a cut $\Ga=R\cup L\cup\{a\}$ formed by $a$ and two rays $R$, $L$ landing at $a$ such that $P(R)=P(L)$ is a \emph{critical cut}.
The corresponding wedge $W$ is called a \emph{critical wedge} at $a$.
A critical wedge $W$ at $a$ is $Y$-\emph{empty} if $W\cap Y=\0$.

\begin{lem}
  \label{l:W0}
  Suppose that $\mu_Y(a)<\mu_{K_P}(a)$.
Then there is at least one $Y$-empty critical wedge $W$ at $a$.
\end{lem}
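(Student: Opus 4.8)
The plan is to produce a $Y$-empty critical wedge at $a$ by a counting argument on the preimages of a small piece of $Y$ near $P(a)$, followed by a topological argument placing this piece in the complement of a suitably chosen critical cut.

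First I would set up local coordinates. Since $a$ is irregular, $\mu_{K_P}(a)>\mu_Y(a)\ge 1$; write $\ell=\mu_{K_P}(a)$ and $m=\mu_Y(a)$, so $\ell>m$. In a small local holomorphic coordinate $x$ near $a$ (with $x=0$ at $a$) and $y$ near $P(a)$, the map $P$ looks like $y=x^\ell$, and the $y$-axis is rotated by the $\ell$-th roots of unity into $\ell$ sectors, each mapped homeomorphically onto a neighborhood of $P(a)$. Exactly $m$ of these $\ell$ local preimage-sectors meet $Y$ in a set accumulating at $a$, because precisely $m$ points of $P^{-1}(z)\cap Y$ lie near $a$ for $z\ne P(a)$ close to $P(a)$; hence at least $\ell-m\ge 1$ of the sectors are disjoint from $Y$ near $a$. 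I would fix one such sector $S$ not meeting $Y$ in a punctured neighborhood of $a$.

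Next I would organize the external rays landing at $a$. By the Landing Theorem (as $a$ is preperiodic, hence eventually repelling periodic) there are finitely many external rays $R_1,\dots,R_p$ landing at $a$, cyclically ordered, and they divide a neighborhood of $a$ into $p$ local sectors. Each such sector of the ray configuration is contained in one of the $\ell$ coordinate-sectors above (shrinking the neighborhood if needed), since the rays land at $a$ and so enter $a$ along well-defined directions that respect the local $x^\ell$ structure; more precisely, consecutive rays landing at $a$ whose images under $P$ are equal bound a critical wedge, and the standard fact that $P$ permutes the rays at $a$ $\ell$-to-one onto the rays at $P(a)$ lets me select two rays $R_i$, $R_j$ with $P(R_i)=P(R_j)$ such that the wedge $W$ they bound lies (locally) inside the chosen sector $S$. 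I would need to check that such a pair actually exists bounding the ``short'' side lying in $S$: this follows because the $p$ rays, grouped by their images, form $\ell$ blocks arranged around $a$ consistent with the $x^\ell$ picture, so every coordinate-sector containing at least one ray contains a critical wedge, and a coordinate-sector containing no ray is itself contained in the interior of a single critical wedge. Either way, $S$ (or a sub-sector of it) lies inside some critical wedge $W$.

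Finally I would upgrade ``$W\cap Y=\emptyset$ near $a$'' to ``$W\cap Y=\emptyset$'' globally. The wedge $W$ is an open Jordan domain whose boundary is the critical cut $R_i\cup L_j\cup\{a\}$; its only possible intersection with $Y$ would be a set whose closure meets the boundary of $W$. Since $Y$ is invariant and $R_i,L_j$ are external rays (which lie in $\C\sm K_P\supset \C\sm Y$), any point of $W\cap Y$ would give a component of $W\cap Y$ whose closure meets $\{a\}$ — but we just arranged $W$ to be disjoint from $Y$ in a punctured neighborhood of $a$, and $a\in\partial W$. Hence $W\cap Y=\emptyset$, i.e., $W$ is $Y$-empty, proving the lemma. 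The main obstacle I expect is the bookkeeping in the middle step: verifying carefully that the ray configuration at $a$ is compatible with the local $x^\ell$ normal form so that one genuinely obtains a critical wedge inside the $Y$-free coordinate-sector, rather than a wedge straddling several sectors; this requires invoking that $P$ maps the rays at $a$ onto the rays at $P(a)$ exactly $\ell$-to-$1$ in a way matching the local degree, together with the fact that a sector containing no ray sits inside a single critical wedge.
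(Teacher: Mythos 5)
Your first step has a genuine gap. From ``precisely $m=\mu_Y(a)$ points of $P^{-1}(z)\cap Y$ lie near $a$'' you conclude that exactly $m$ of the $\ell=\mu_{K_P}(a)$ local coordinate sectors meet $Y$ near $a$, hence $\ell-m\ge 1$ sectors are $Y$-free. This does not follow when $Y\setminus\{P(a)\}$ has several local branches at $P(a)$: the $m$ preimages of one fixed $z$ do lie in $m$ sectors, but for a $z'$ in a \emph{different} branch of $Y$ at $P(a)$ the $m$ preimages in $Y$ may occupy a different collection of sectors, so the total number of sectors that $Y$ touches near $a$ can be anywhere between $m$ and $\ell$. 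Concretely, take $\ell=3$, $m=2$, and two local branches $Y_1,Y_2$ of $Y$ at $P(a)$ separated by a ray $R_1$; writing $Y_i^{(j)}$ for the pullback of $Y_i$ lying in the $j$-th sector, the germ of $Y$ at $a$ can be $Y_1^{(1)}\cup Y_2^{(2)}\cup Y_1^{(3)}\cup Y_2^{(3)}$. This has the cyclic pattern $Y_1,Y_2,Y_1,Y_2$ forced by $P|_Y$ being a local degree-$2$ cover, and every nearby $z\in Y$ has exactly two preimages in $Y$, yet all three sectors meet $Y$. A $Y$-empty critical wedge does exist --- bounded by the two pullbacks $R_1^{(1)}$ and $R_1^{(2)}$ of the separating ray $R_1$, containing the missing pieces $Y_2^{(1)}$ and $Y_1^{(2)}$ --- but it straddles two of your sectors, so your sector count never produces it.

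This is exactly why the paper's proof does not argue sector by sector. It lists \emph{all} local branches $Y_1,\dots,Y_s$ of $Y$ at $P(a)$ together with external rays $R_1,\dots,R_s$ separating consecutive branches (invoking \cite{bot21} and Theorem 6.6 of \cite{mcm94} for the existence of those rays), and pulls the whole configuration back to $a$. Around $a$ one sees the pattern $Y_1,\dots,Y_s$ repeated $\ell$ times, of which the pieces lying in $Y$ repeat only $m$ times; since $m<\ell$, two consecutive $Y$-branches must be non-adjacent, and the two ray-pullbacks of the same $R_i$ bounding that gap form the desired $Y$-empty critical cut. Your approach is essentially equivalent to this only in the special case $s=1$ (i.e., $P(a)$ not a local cut point of $Y$), where ``sectors hit by $Y$'' and ``preimages of a single $z$'' really do coincide. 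There is also a secondary soft spot in your middle step --- sectors defined by an arbitrary slit are not critical wedges, and a sector with no ray landing at $a$ sits between two consecutive rays which need not have equal $P$-images --- though that can be repaired by choosing the slit to be an external ray landing at $P(a)$. Your final step, upgrading local to global $Y$-emptiness using invariance and the fact that external rays avoid $K_P\supset Y$, is fine.
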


\begin{proof}
Consider all components of $Y\sm\{P(a)\}$; let $s$ be the number of them.
By Theorem 6.6 of \cite{mcm94}, all components of $K_P\sm \{P(a)\}$ are separated from each other
 by the star cut formed by $P(a)$ and all external rays landing at $P(a)$.
By the main result of \cite{bot21}, each component of $K_P\sm\{P(a)\}$ includes at most one component of $Y\sm\{P(a)\}$.
Therefore, every component of $Y\sm\{P(a)\}$ is separated from the next one in
the cyclic order by an external ray landing at $P(a)$. Denoting components of
$Y\sm \{P(a)\}$ by $Y_1, \dots, Y_s$ and external rays landing at $a$ and separating these components by $R_1, \dots, R_s$ we may assume that
$$Y_1\prec R_1\prec Y_2\prec R_2\prec \dots\prec Y_s\prec R_s$$
where $\prec$ indicates positive (counterclockwise) circular direction.

If we pull this picture back to $a$ we will see that there are $\mu_{K_P}(a)$ pullbacks of each ray $R_i$ and
 $\mu_{K_P}(a)$ pullbacks of each set $Y_j$ from the previous paragraph ``growing'' out of $a$.
Since $\mu_Y(a)<\mu_{K_P}(a)$, not all pullbacks of sets $Y_i$ are contained in $Y$,
 some of them are \emph{not} contained in $Y$.
However, it follows from the definitions, in particular, from the fact that $P|_Y$ coincides with $\tilde{P}|_Y$,
 that the circular order of the pullbacks of $Y_i$ contained in $Y$ must follow that of the sets $Y_1$, $\dots$, $Y_s$.
Let us now choose a pullback of $Y_1$ that \emph{is} contained in $Y$, and move from it in the positive direction.
We will  be encountering pullbacks of sets $Y_i$ and pullbacks of rays $R_j$ in the same order of increasing of
 their subscripts until we reach the next pullback of $Y_1$.
However, $\mu_Y(a)<\mu_{K_P}(a)$.
Hence at some moment in this process the pullback $Y'_i\subset Y$ of $Y_i$ and the following it
 pullback $Y'_{i+1}\subset Y$ of $Y_{i+1}$ are not located in the adjacent pullbacks of the wedges between
 the corresponding external rays.
Rather, there will be a pullback $R'_i$ of $R_i$ and then the next
(in the sense of positive circular order) pullback $R''_i$ of $R_i$ such that there are no points of $Y$ in between these rays.
The wedge between $R'_i$ and $R''_i$ is the desired $Y$-empty critical wedge $W$ at $a$.
\end{proof}

Define $\Zc^{irr}$ (``irr'' stands for ``irregular'') as the set of boundary cuts of all $Y$-empty critical wedges
 at all irregular points of $Y$.
More precisely, for every irregular point $a\in Y$, mark specific $s$ rays separating $s$ components of $Y\sm\{P(a)\}$.
Then choose all $Y$-empty critical wedges at $a$ bounded by pullbacks of the marked rays (cf. Lemma \ref{l:W0}).
The family of cuts $\Zc^{irr}$ is clearly admissible.

\subsection{Reducing to the case of no irregular points}
We keep the notation introduced above.
By definition of $\Zc^{irr}$, we have $Y\subset A_P(\Zc^{irr})$.
By Theorem \ref{t:main-a} applied to $P$ and $\Zc^{irr}$, there is a polynomial $P^*$ such that
 $P^*:K_{P^*}\to K_{P^*}$ is topologically conjugate to $P:A_P(\Zc^{irr})\to A_P(\Zc^{irr})$.
Let $Y^*$ be the $P^*$-invariant continuum corresponding to $Y$ under this conjugacy.
We claim that $Y^*$ contains no irregular points.

If $a\in Y^*$ is an irregular point, then $\mu_{Y^*}(a)<\mu_{K_{P^*}}(a)$.
By Lemma \ref{l:W0}, there is a $Y^*$-empty critical wedge at $a$.
A corresponding $Y$-empty critical wedge at $a$ must be included into $\Zc^{irr}$; a contradiction.
Thus all points of $Y^*$ are regular.

Replacing $P$ with $P^*$ and $Y$ with $Y^*$, we may now assume that
 $P:Y\to Y$ has no irregular points.
However, then $P:Y\to Y$ satisfies the assumptions of Theorem \ref{t:poly-like}.
(Observe that the absence of irregular points is equivalent to the condition that $Y$ is a component of $P^{-1}(Y)$.)
The conclusion of the Main Theorem now follows from Theorem \ref{t:poly-like}.

\subsection{Proofs of Corollaries \ref{c:main1} and \ref{c:dou}}
Finally, we prove Corollaries stated in the introduction.

\begin{proof}[Proof of Corollary \ref{c:main1}]
Assume that $F$ is an invariant planar fiber of $P$ such that $Y=F\cap K_P$ is not a singleton.
We claim that $P|_Y$ is a degree $k$ branched covering for some $k>1$.
It is easy to see that planar fibers map onto (and locally onto) planar fibers
(see, e.g.,  \cite{sch99} or \cite{bclos16}); in particular, $P(Y)=Y$.
Observe that if $P|_Y$ is 1-to-1 then all the arguments of Theorem \ref{t:1-1} apply to $Y$
and imply that $Y$ is a singleton, a contradiction.
Hence there are points of $Y$ with more than one preimage in $Y$.

Suppose that $z\in Y$ is irregular. Then $z$ is critical,
and there are pairs of points $y'$, $y''$ arbitrarily close to $z$ such that $P(y')=P(y'')=y$, where $y'\in Y$ and $y''\notin Y$.
We claim that then $z$ is preperiodic, and there are several (rational) rays
that land at $z$. Suppose otherwise.
Choose a rational cut $\Ga''$ that separates $z$ and $y''$; set $\Ga=P(\Ga'')$.
By the assumption, $\Ga''$ does not contain $z$.
But then there exists another cut $\Ga'\subset P^{-1}(\Ga)$ that separates $y'$ from $z$, a contradiction.
Hence $z$ is preperiodic and there are rational rays landing at $z$.
This implies that $z$ maps to a repelling periodic point (recall that by the assumptions of Corollary
\ref{c:main1} there are no post-critical parabolic points in $Y$) and fulfills one of the assumptions of the Main Theorem.

It remains to verify that $P:Y\to Y$ is a covering.
Rather than doing this directly, it will be easier to use Theorem \ref{t:main-a}.
By definition of a fiber, there is a wedge $W_z$ at $z$ such that $\bd(W_z)=\Ga_z$ is a critical cut and $Y\subset\ol W$.
We may also assume that the component of $P^{-1}(P(\Ga_z))$ containing $z$ is disjoint from $W_z$.
Consider the collection $\Zc^{irr}=\{\Ga_z\}$, where $z$ runs through the set of all irregular points of $Y$.
Clearly, $\Zc^{irr}$ is admissible and satisfies the assumptions of Theorem \ref{t:main-a}.
It follows that the corresponding avoiding set $A=A_P(\Zc^{irr})\supset Y$ gives rise to a polynomial $Q$ such that
 $Q:K_Q\to K_Q$ is topologically conjugate to $P:A\to A$.
Moreover, the conjugacy extends as a positively oriented homeomorphism between neighborhoods of $K_Q$ and $A$.
(This extension is not a conjugacy, however.)
Passing from $P$ to $Q$, we may assume that $Y$ has no irregular points at all.
In this case, $P:U\to V$ is a degree $k$ covering for some $k>1$ and some neighborhoods $U$, $V$ of $Y$.
Moreover, $Y=P^{-1}(Y)\cap U$.
Thus, $Y$ satisfies all assumptions of the Main Theorem, and we are done.
\end{proof}

\begin{proof}[Proof of Corollary \ref{c:dou}]
Suppose that $P$ is a cubic polynomial such that $P(0)=0$ and $P'(0)=e^{2\pi i\theta}$ with $\theta\in\R\sm\Q$.
Suppose also that a critical point $\om_2$ of $P$ is (pre)periodic.
Then the other critical point $\om_1$ of $P$ is necessarily recurrent.
Either the boundary of the Siegel disk around $0$ or the point $0$ itself (if it is Cremer) is in the $\om$-limit set of $\om_1$.
Let $F$ be the planar fiber of $P$ containing $0$; set $Y=F\cap K_P$.
First note that $Y\ne K_P$ since $\om_2$ is by definition a valuable point.
Corollary \ref{c:main1} is applicable to $Y$.
Indeed, there are no parabolic cycles of $P$; otherwise the corresponding cycle of Fatou domains would contain $\om_2$.
By Corollary \ref{c:main1}, there is a quadratic polynomial $Q$ such that $Q|_{K_Q}$ is topologically conjugate to $P|_Y$.
There is an affine coordinate $z$ on $\C$, for which $Q$ has the form $Q(z)=\lambda z(z+1)$.
By Corollary \ref{c:pm}, we have $\lambda=P'(0)$.
Clearly, $0$ is Siegel (resp., Cremer) for $P$ if and only if it is Siegel (resp. Cremer) for $Q$.
The result now follows from the Theorem of Yoccoz \cite{yoc}.
\end{proof}

\subsection{Acknowledgements}
The authors are deeply grateful to the referee for many thoughtful suggestions.

\end{document}